\numberwithin{equation}{section}
\newtheorem{theorem}{Theorem}[section]
\newtheorem{prop}[theorem]{Proposition}
\newtheorem{lemma}[theorem]{Lemma}
\newcommand\Item[1][]{%
  \ifx\relax#1\relax  \item \else \item[#1] \fi
  \abovedisplayskip=0pt\abovedisplayshortskip=0pt~\vspace*{-\baselineskip}}
\theoremstyle{definition}
\newtheorem{defn}[theorem]{Definition}
\theoremstyle{definition}
\newtheorem{remark}[theorem]{Remark}
\newcommand{\sr}{\textcolor{blue}}
\newcommand{\mou}{\textcolor{red}}
\newcommand{\dhr}{\textcolor{green}}
\DeclareMathOperator{\Prob}{\mathbf{P}}
\DeclareMathOperator{\Out}{Out}
\DeclareMathOperator{\E}{\widehat{E}}
\title[Ergodicity of a generalized probabilistic cellular automaton with parity-based neighbourhoods]{Ergodicity of a generalized probabilistic cellular automaton with parity-based neighbourhoods}
\date{}
\author{Dhruv Bhasin, Sayar Karmakar, Moumanti Podder, Souvik Roy}
\address{Dhruv Bhasin, Indian Institute of Science Education and Research (IISER) Pune, Dr.\ Homi Bhabha Road, Pashan, Pune 411008, Maharashtra, India.}
\address{Sayar Karmakar, University of Florida, 230 Newell Drive, Gainesville, Florida 32605, USA.}
\address{Moumanti Podder, Indian Institute of Science Education and Research (IISER) Pune, Dr.\ Homi Bhabha Road, Pashan, Pune 411008, Maharashtra, India.}
\address{Souvik Roy, Indian Statistical Institute, 203 Barrackpore Trunk Road, Kolkata 700108, West Bengal, India.}
\email{bhasin.dhruv@students.iiserpune.ac.in}
\email{sayarkarmakar@ufl.edu}
\email{moumanti@iiserpune.ac.in}
\email{souvik.2004@gmail.com}
\begin{document}
\bibliographystyle{plainnat}

\begin{abstract}

We study a one-dimensional generalized probabilistic cellular automaton $E_{p, q}$ with universe $\mathbb Z$, alphabet $\mathcal A = \{0, 1\}$, parameters $p$ and $q$ such that $0 < p+q \leq 1$ and two neighbourhoods $\mathcal N_0 = \{0, 1\}$ and $\mathcal N = \{1, 2\}$. The state $E_{p, q} \eta (x)$ of any $x \in \mathbb Z$ under the application of $E_{p, q}$ is a random variable whose probability distribution depends on the states $\eta(x + y)$ for $y \in \mathcal N_i$ where $i$ has the same parity as $x$. We establish ergodicity of this GPCA for various ranges of values of $p$ and $q$ via its connection with a suitable \textit{percolation game} on a two-dimensional lattice. For these same ranges of values of $p$ and $q$, we show that the above-mentioned game has probability $0$ of resulting in a draw.  

\end{abstract}

\subjclass[2020]{05C57, 37B15, 37A25, 68Q80}

\keywords{percolation games on lattices; two-player combinatorial games; probabilistic cellular automata; ergodicity; probability of draw; weight function; potential function}

\maketitle
\sloppy
\section{Introduction}\label{sec:introduction}

\subsection{Overview of the paper}\label{subsec:overview} \textit{Cellular automata} (henceforth abbreviated as CAs) are an important and interesting class of objects studied in computer science and mathematics. They act on a \textit{configuration} of symbols coming from a finite \textit{alphabet} set $\mathcal A$ on the euclidean lattice $\mathbb Z^d$ for some positive integer $d$. Every vertex of $\mathbb Z^d$ is called a \textit{cell}. The space of all configurations is $\mathcal A^{\mathbb Z^d}$ and a canonical element of $\mathcal A^{\mathbb Z^d}$ is usually denoted by $\eta$. A $d-$dimensional CA is a dynamical system on configurations coming from $\mathcal A^{\mathbb Z^d}$ which is obtained by repeatedly applying a \textit{local update rule} at every cell of the lattice. The update rule is local in the sense that the way in which a given cell $\mathbf x \in \mathbb Z^d$ gets updated depends only on the symbols of some finite \textit{neighbours} of $\mathbf x$. The neighbours are determined using the following method: there is a predetermined set $\mathcal N = \{\mathbf y_1, \mathbf y_2, \dots, \mathbf y_m\} \subseteq \mathbb Z^d$ called the \textit{neighbourhood} and given any $\mathbf x \in \mathbb Z^d$, the neighbours of $\mathbf x$ are the elements of the set $\mathcal N + \mathbf x = \{\mathbf y + \mathbf x : \mathbf y \in \mathcal N\}$. Starting from a configuration $\eta_0$, we denote the configuration obtained after $t$ iterations of applying the CA by $\eta_t$.

In the case when the local update rule is random, a similar process generates a Markov chain called a \textit{probabilistic cellular automaton} (henceforth abbreviated as PCA).  A local update rule of a PCA is usually denoted by a stochastic matrix. We give a formal definition of this stochastic matrix in \ref{subsec:gpca}. Since the neighbours of a cell are translations of a given set, we can say that CAs or PCAs perform their update rules in a ``homogeneous" manner. Another way to think about this is that the the local neighbourhood of each cell in PCAs (or CAs) ``look" the same (upto translation) at every cell.

In this paper, we study a generalization of PCAs which we call \textit{generalized probabilistic cellular automata} (henceforth abbreviated as GPCAs). The main difference between PCA and GPCA is in the way the neighbours are determined. For a GPCA, for each $\mathbf x \in \mathbb Z^d$, its neighbourhood is given by $\mathcal{N}^{\mathbf{x}} = \{\mathbf{y}_{1}^{\mathbf{x}}, \mathbf{y}_{2}^{\mathbf{x}}, \ldots, \mathbf{y}_{m}^{\mathbf{x}}\} \subset \mathbb{Z}^{d}$ and this is allowed to be different for different cells $\mathbf x \in \mathbb Z^d$. Given this collection, the neighbours of a given cell $\mathbf x$ are elements of the set $\mathbf x + \mathcal N^{\mathbf x}$ which is used to determine the probability distribution of the alphabet assigned to this cell after an update of the GPCA. In a manner similar to that in PCAs, this probability distribution is determined using the local update rule. It is worth emphasizing that GPCAs are not ``homogeneous" with respect to neighbourhoods associated with various cells in $\mathbb Z^d$. This allows for the incorporation of variations in the behaviour of a GPCA from the perspective of different cells in $\mathbb Z^d$. 

The one-dimensional GPCA $E_{p, q}$ that we investigate in this paper has the underlying alphabet set $\mathcal = \{0, 1\}$ and has two sets $\mathcal N_0 = \{0, 1\}$ and $\mathcal N_1 = \{1, 2\}$ associated to it. These sets are used to determine the collection $\mathcal N^{x}$ for each $x \in \mathbb Z$ in the following way: if $x$ is even then $\mathcal N^{x} = \mathcal N_0$ and if $x$ is odd then $\mathcal N^{x} = \mathcal N_1$, and using these sets, the neighbours of each cell are decided as described earlier. The parameters $p$ and $q$ play an important role in describing our GPCA. They are members of the following set:
\begin{equation}\label{parameter_space}
(p,q) \in \mathcal{S} = \{(p', q') \in [0,1]^{2}: 0 < p'+q' \leqslant 1\}.\nonumber
\end{equation}
If the neighbours $x_1$ and $x_2$ of a given cell $x$ are both equal to $0$ then $x$ is updated to be $1$ with probability $1-p$ and $0$ with probability $p$ and otherwise, $x$ is updated to be $0$ with probability $1-q$ and $1$ with probability $q$. For a more formal definition of $E_{p, q}$, we refer the reader to \ref{subsec:gpca}.

An important question that is usually asked in the literature regarding a PCA is that of its \textit{ergodicity}. Intuitively, a PCA (GPCA) is ergodic if it, asymptotically, tends to forget its initial state $\eta_0$ and there is a unique probability distribution to which the distribution of $\eta_t$ converges as $t$ tends to infinity irrespective of the initial state. CAs are used in models of massive parallel computations (\cite{garzon2012models}), and noisy CAs can be seen as PCAs. If such a PCA is ergodic, then it means that the corresponding CA tends to forget everything about its initial state due to the noise. This is an undesirable property because we want our computational models to have tolerance against noise and not loose all the initial information.

In this paper, we investigate the question of ergodicity of our GPCA $E_{p, q}$. In \cite{buvsic2013probabilistic}, the concept of GPCAs was used (the authors called these non-homogeneous PCAs) to approximate the Majority PCA by a sequence of GPCAs and consequently deduce numerical observations about the Majority PCA. To the best of our knowledge, the question of ergodicity of any GPCA has not yet been studied in the literature. We now state our main result:

\begin{theorem}\label{thm:main_theorem_ergodic_gpca}
The GPCA $E_{p,q}$ is ergodic for following ranges of $p$ and $q$:
\begin{enumerate}
    \item $ q=0,p > p_0,$ where $p_0 \approx 0.215$ is the unique real root of $p^5 - 5p^4 + 9p^3 - 8p^2 + 6p -1$.
        \item $q > \max\left\{ \frac{-\sqrt{p^2 +2}-p+2}{2}, 0\right\}, p\geq 0$.
\end{enumerate}
\end{theorem}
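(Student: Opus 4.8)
The plan is to exploit the correspondence, set up earlier in the paper, between the space-time evolution of $E_{p,q}$ and a percolation game on a two-dimensional lattice, and to recast ergodicity as a statement about this game. Concretely, I would first invoke the reduction that identifies the GPCA as ergodic precisely when the associated percolation game terminates almost surely, i.e.\ when it has probability $0$ of ending in a draw. Each site of the game board is independently assigned a type according to the parameters $p$ and $q$ (reflecting the two update probabilities in the definition of $E_{p,q}$), and two players alternately move a token; a draw corresponds to infinite play, which is exactly the obstruction to the GPCA forgetting its initial configuration. The whole problem is thus transferred to showing that the draw probability vanishes on the two stated parameter regions.

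To control the draw probability I would introduce a \emph{weight function} (equivalently, a potential function) on the relevant game configurations, chosen so that its conditional expectation strictly decreases along the dynamics of optimal play. The strategy is to classify sites as winning, losing, or drawn for the player to move, and then to assume, for contradiction, that the set of drawn sites has positive density. On the structure induced by the drawn sites, every legal move from a drawn site leads to a winning or a drawn site, with at least one drawn continuation. Assigning carefully calibrated weights to the local patterns around a site and summing via a mass-transport / expectation argument, one should obtain that the total expected weight must simultaneously decrease strictly at each step and remain nonnegative, which is impossible unless the density of drawn sites is zero. The quantitative drift inequality certifying the strict decrease is where the parameters enter: it reduces to a polynomial inequality in $p$ (and $q$), and the thresholds are exactly the loci where this inequality degenerates.

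Carrying out this drift computation is where the two cases and the explicit thresholds appear. For case (i), with $q = 0$, the update is deterministic except when both neighbours equal $0$, so the weight function need only track a short look-ahead window; optimizing the weights over this window, or equivalently solving the resulting self-consistency relation for the marginal site-type probabilities, should produce the quintic $p^5 - 5p^4 + 9p^3 - 8p^2 + 6p - 1$, whose unique real root $p_0 \approx 0.215$ marks the boundary of negative drift. For case (ii), the same machinery with $q > 0$ should yield a quadratic condition in $q$ for each fixed $p$, and solving it gives the stated bound $q > \tfrac{-\sqrt{p^2+2}-p+2}{2}$.

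I expect the main obstacle to be the construction of the weight function itself: it must be simultaneously nonnegative, compatible with the move structure of the percolation game, and sharp enough that the induced drift inequality captures the true ergodic threshold rather than a suboptimal sufficient condition. Verifying the strict supermartingale property requires a careful case analysis over the possible local environments of a site---each weighted by its probability under $(p,q)$---and it is the bookkeeping in this case analysis, together with the optimization that tightens the inequality to the exact polynomial thresholds, that constitutes the technical heart of the argument.
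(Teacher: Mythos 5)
Your high-level route is indeed the paper's: tie $E_{p,q}$ to the percolation game, rule out draws via a weight-function argument, and let the polynomial thresholds fall out of positivity of coefficients. But the proposal has genuine gaps. First, the object you propose to analyze is not the one the paper actually uses: the paper never runs a drift or mass-transport argument on the two-dimensional game board. It instead introduces the envelope GPCA $\widehat{E}_{p,q}$ on the alphabet $\{0,?,1\}$ (with $?$ encoding draws), shows via the monotonicity/sandwich argument of Lemma~\ref{lemma_basic} that ergodicity of $E_{p,q}$ follows once no stationary measure of $\widehat{E}_{p,q}$ charges $?$ (Theorem~\ref{thm:main_theorem}), and applies the weight function to cylinder probabilities of such a stationary $\mu$, after first reducing to measures with the symmetry and $2$-periodicity properties (i)--(iv) via Lemma~\ref{lemma:suffice} --- a reduction your ``positive density of drawn sites'' framing would also need, since the model is only $2$-periodic, not translation invariant. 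Moreover, your insistence that the weight be nonnegative and strictly supermartingale is a red herring: the paper's weights are signed (e.g.\ $w_2(\mu) = w_1(\mu) - \mu(1?0)_0 - \mu(??0)_0$), and nonnegativity is never invoked, because for stationary $\mu$ one has $w(\mu) - w(\widehat{E}_{p,q}\mu) = 0$ exactly, which immediately forces every cylinder probability carrying a positive coefficient to vanish. Your reduction direction (draw probability zero $\Rightarrow$ ergodicity) leans on the equivalence of Proposition~\ref{prop:game_ergodic}, whose proof itself passes through this same envelope machinery, so staying on the game side buys nothing and only obscures where stationarity enters.

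Second, and decisively, you never construct the weight functions, and that construction \emph{is} the proof. For case (i) the paper needs a multi-stage iterative correction $w_0,\dots,w_5$ involving cylinder sets of length up to six, such as $\mu(00\widehat{**}\widehat{**})_0$ with $\widehat{**} = \{0,?\}^2\setminus\{(0,0)\}$, and the final coefficients in \ref{coeff_poly} are simultaneously positive only on an interval $(p_0,p_1)$ with $p_1 \approx 0.555$ a root of $p^3-2p^2-p+1$; the range $p>1/2$ must be handled separately by the shorter weight $w_2$, and the theorem's range $p > p_0$ is the union of the two cases. Your remark that a ``short look-ahead window'' suffices when $q=0$, and your expectation that optimizing the weights yields ``the true ergodic threshold rather than a suboptimal sufficient condition,'' both miss the actual structure: $p_0$ and the curve $q > \bigl(-\sqrt{p^2+2}-p+2\bigr)/2$ are not shown to be thresholds of the dynamics at all; they are merely where the hand-built coefficients (verified with computer algebra) remain positive, i.e.\ sufficient conditions. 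Asserting that the drift computation ``should produce'' the quintic $p^5-5p^4+9p^3-8p^2+6p-1$ is a statement of hope, not a derivation --- the bookkeeping you correctly identify as the technical heart is precisely what is absent.
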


The GPCA we study has a close connection with a \textit{percolation game} played on the two dimensional square lattice $\mathbb Z^2$. Let $(p, q) \in \mathcal S$. Every vertex of $\mathbb Z^2$ is, independently of others, assigned a label which reads \textit{trap} with probability $p$, \textit{target} with probability $q$ and \textit{open} with probability $r = 1 - p - q$. The two-player combinatorial game we are concerned with is played on a realization of this random assignment. A token is placed at a vertex of $\mathbb Z^2$ termed the \textit{initial vertex}. The players take turns to move the token where a move comprises moving the token from its current position $(x, y)$ to either:
\begin{enumerate}
    \item $(x, y+1)$ or $(x+1, y+1)$ if $x$ is even.
    \item $(x+1, y+1)$ or $(x+2, y+1)$ if $x$ is odd.
\end{enumerate}

A player wins the game if she is able to move the token to a vertex which has been labelled a target or if she can force her opponent to move the token to a vertex which is a trap. The game continues as long as the token stays on vertices that have been marked open. This could happen indefinitely, in which case, the game is said to result in a draw.

\subsection{Brief survey of existing literature}\label{subsec:survey} CAs are simple to define objects which have a fair amount of complexity in their properties. Computationally they are of great interest because of the fact that they can simulate any Turing machine \cite{goles1999uniform}. It is worth mentioning that the celebrated Game of Life of John Conway can be seen as a CA \cite{gardner1970fantastic}. 

As mentioned earlier, PCAs are CAs that have stochastic update rules. We refer the reader to \cite{mairesse2014around} which includes a detailed survey of PCAs where they discuss different ways PCAs can arise in combinatorics, statistical physics and theoretical computer science. Some of the primary motivations of studying PCAs are to investigate the fault tolerant capabilities of CAs (\cite{mccann2008fault, mccann2013fault}), classification of elementary CAs by using their robustness to errors as a discriminating criterion (\cite{boure2012probing, lei2021entropy} and to investigate their connections with Gibbs potentials and Gibbs measures coming from statistical physics (\cite{grinstein1985statistical, lebowitz1990statistical, rujan1987cellular}).

In \cite{holroyd2019percolation}, which is the primary motivation of our present work, the idea of percolation games was introduced. It is also a two player combinatorial game played on realizations of traps, targets and open sites on $\mathbb Z^2$. This realization is obtained in a similar manner as described in the previous section. The game studied in this paper is such that a player can move the token from $(x, y)$ to one of $(x+1, y)$ or $(x, y+1)$. The authors build a connection between this game and a PCA $A_{p, q}$. It is proved that $A_{p, q}$ is ergodic and consequently the probability of draw in the percolation game is $0$ for all values of the defining parameters $p$ and $q$. The connection  between the game and the PCA allowed the authors to use a corresponding \textit{envelope} PCA $F_{p, q}$ and the \textit{weight function} method to obtain their main result.

In \cite{bhasin2022class}, a similar connection between percolation games and PCAs was investigated. The percolation game investigated in this paper allowed a player to move the token from $(x, y)$ to one of $(x, y+2), (x+1, y+1), (x+2, y)$. The PCA investigated in this paper corresponded to these percolation games and because, in any turn, a player has three choices to move the token to, the PCA studied has neighbourhood of size $3$. By constructing a suitable weight function for this PCA, its ergodicity for any parameters $(p, q) \in \mathcal S$ was established.

In \cite{bresler2022linear}, the connections made in \cite{holroyd2019percolation} were used to come up with computer-generated weight functions for two different PCAs whose update rules can be described by $\eta_{t+1}(n) = \text{BSC}_p(\text{NAND}(\eta_t(n-1), \eta_t(n)))$ and $\eta_{t+1}(n) = \text{NAND}(\text{BSC}_p(\eta_t(n-1), \eta_t(n)))$. Here, $\text{BSC}_p$ denotes a binary symmetric channel that takes a bit as input and flips it with probability $p$ and leaves it unchanged with probability $1-p$. The weight functions obtained in this paper are for $ p \in (0, \epsilon)$ for some small $\epsilon > 0$. The authors introduce the concept of polynomial linear programming and a suitable algorithm to obtain a suitable weight function for proving the ergodicity of these PCAs.


This paper is organized as follows: in \ref{sec:model}, we describe our model--percolation game we study is described in \ref{subsec:game}, our GPCA is described in \ref{subsec:gpca}, our \textit{envelope} GPCA is introduced in \ref{subsec:envelope_gpca} and finally we draw the connection between the game and GPCA in \ref{subsec:connection} where we also state our main result. We state and prove some preliminary lemmas in \ref{sec:lemmas} and finally we proved our main theorem in \ref{sec:proof}.

\section{Description of the model}\label{sec:model}
Our model has two interrelated components, a percolation game and a generalized version of probabilistic cellular automaton. A percolation game is a combinatorial game played on a (possibly infinite) random graph and a probabilistic cellular automaton is a particular type of Markov process. We describe these components in what follows.  

\subsection{Description of our game}\label{subsec:game} We consider a percolation game on $\mathbb Z^2$. To begin with, every vertex of $\mathbb Z^2$, independent of other vertices, is labelled a trap with probability $p$, a target with probability $q$ and it is open with probability $1-p-q$. Here $p$ and $q$ are the parameters of the game and $(p,q) \in \mathcal S$ where $\mathcal S$.

The game is played between two players who take turns to move the token. Initially, a token is kept at the origin. 
If the token is at vertex $(x, y)$ then the player, in turn, can move it to:
\begin{enumerate}
    \item $(x,y+1)$ or $(x+1,y+1)$ if $x$ is even.
    \item $(x+1, y+1)$ or $(x+2, y+1)$ if $x$ is odd.
\end{enumerate}

We demonstrate the directed graph in which the game is being played in Figure~\ref{gameplay}. 

\begin{figure}
        \centering
        \includegraphics[width=0.5\linewidth]{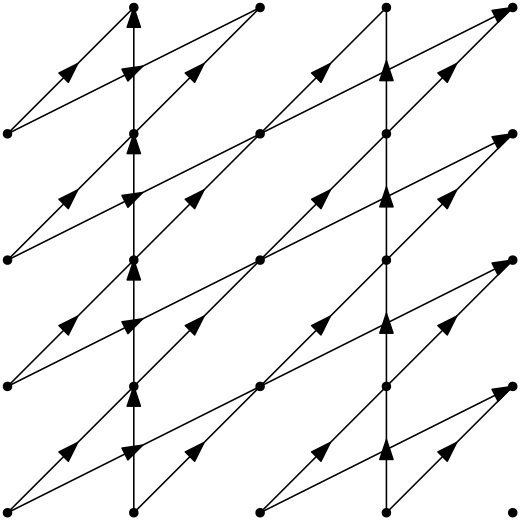}
        \caption{The graph on which the game is played}
        \label{gameplay}
    \end{figure}
    
A player wins the game if she is able to move her token to a target or if the other player is forced to move the token to a trap. The game continues as long as the token remains on an open node. If the game continues forever it is said to result in a draw. We note here that both the players decide their strategies after the graph has been assigned with the trap/target/open labelling and that they have access to the whole information of the graph and form their strategies accordingly.

\subsection{Description of our GPCA}\label{subsec:gpca} In the existing literature for (deterministic) cellular automata (henceforth abbreviated as CA), and extending on it, for probabilistic cellular automata (henceforth abbreviated as PCA), we usually come across the following three principal components:   
\begin{enumerate*}
\item a finite set $\mathcal{A}$ of symbols termed the \emph{alphabet},
\item a finite set of indices $\mathcal{N} = \{\mathbf{y}_{1}, \mathbf{y}_{2}, \ldots, \mathbf{y}_{m}\} \subset \mathbb{Z}^{d}$ (where $d$ indicates the dimension of the CA or PCA under consideration) which is called the \emph{neighbourhood},
\item and local update rule $\varphi: \mathcal{A}^{m} \times \mathcal{A} \rightarrow [0,1]$ that is deterministic in case of CAs and stochastic in case of PCAs.
\end{enumerate*}
In addition to these essential ingredients, for a $d$-dimensional CA or a PCA, the \emph{state space} $\Omega$ equals $\mathcal{A}^{\mathbb{Z}^{d}}$, and the elements $\eta = (\eta(\mathbf{x}): \mathbf{x} \in \mathbb{Z}^{d})$ of $\Omega$ are termed \emph{configurations}. We would like to draw the reader's attention to the fact that the concept of a neighbourhood is defined in a uniform way throughout the lattice in the sense that the neighbourhood $\mathcal N$ is independent of the choice of $\mathbf{x} \in \mathbb Z^d$. To generalise the notion of a PCA to that of a generalised probabilistic cellular automaton (henceforth GPCA), we make a change to the notion of neighbourhood $\mathcal N$ and make them dependent on vertices. Thus, the neighbourhood for a vertex $x$ is: $\mathcal{N}^{\mathbf{x}} = \{\mathbf{y}_{1}^{\mathbf{x}}, \mathbf{y}_{2}^{\mathbf{x}}, \ldots, \mathbf{y}_{m}^{\mathbf{x}}\} \subset \mathbb{Z}^{d}$,  and the neighbours of a vertex $\mathbf{x}$ are the elements of the set $\mathbf{x}+\mathcal N^{\mathbf{x}}$. It should be noticed that, though the set of indices may vary over  nodes, the number of indices remain constant. This enables us to use a common local update rule for all nodes. More formally, the update rules for a GPCA $F$ are represented by a stochastic matrix $\varphi: \mathcal{A}^{m} \times \mathcal{A} \rightarrow [0,1]$ defined as follows:
\begin{align}\label{general_update_rule_eq}
\Prob[F \eta(\mathbf{x}) = b\big|\eta(\mathbf{x}+\mathbf{y}_{i}^{\mathbf{x}}) = a_{i} \text{ for all } 1 \leqslant i \leqslant m] = \varphi(a_{1}, a_{2}, \ldots, a_{m}, b) \text{ for all } b \in \mathcal{A},
\end{align}   
where, by definition of stochastic matrices, we have $\varphi(a_{1}, a_{2}, \ldots, a_{m}, b) \geqslant 0$ and $\sum_{b \in \mathcal{A}} \varphi(a_{1}, a_{2}, \ldots, a_{m}, b) = 1$, for all $a_{1}, a_{2}, \ldots, a_{m}, b \in \mathcal{A}$. 

We now define the notion of ergodicity for a $d-$dimensional GPCA $F$. The definition is analogous to the usual definition for PCAs. Let $\mathcal F$ denote the $\sigma-$field that is generated by the cylinder sets of $\Omega  = \mathcal A^{{\mathbb Z}^d}$, and let $\mathbb D$ denote the set of all probability measures over $\Omega$ and defined with respect to the sigma field $\mathcal F$. We put $F^1\eta = F\eta$ and for any natural number $t\geq 2$, we define $F^t\eta = F(F^{t-1}\eta)$ for $\eta \in \Omega$. This definition extend naturally to random $\eta$ following the distribution $\mu \in \mathbb D$. and we let $F^t\mu$ (simply written $F\mu$ when $t=1$ denote the probability distribution of the (also random) configuration $F^t\mu$.

We say that $\mu$ is a \textit{stationary} measure for a GPCA $F$ if $F\mu = \mu$. A GPCA $F$ is said to be \textit{ergodic} if it has unique stationary measure, such that for every probability measure $\nu$ on $\Omega$, the sequence $F^t\nu$ converges weakly to $\mu$ as $t \rightarrow \infty$. That is, the GPCA forgets its initial state and converges to a unique measure.

We now introduce and explain the one-dimensional GPCA $E_{p,q}$ that we study in this paper. The definition itself is the first among quite a few aspects in which the GPCA $E_{p,q}$ differs from more commonly studied PCAs in the literature. While $E_{p,q}$, as usual, is endowed with the alphabet $\mathcal{A} = \{0,1\}$ and stochastic local update rules captured by the stochastic matrix $\varphi_{p,q}$ defined below, we now need \emph{two different} finite subsets of $\mathbb{Z}$, denoted $\mathcal{N}_{1}$ and $\mathcal{N}_{0}$, to serve as neighbourhoods in the definition of this GPCA. For $n \in \mathbb{Z}$, the neighbourhood we consider is $\mathcal{N}_{1}$ if $n$ is odd, and $\mathcal{N}_{0}$ if $n$ is even. 

To elucidate, we set $\mathcal{N}_{1} = \{1,2\}$ and $\mathcal{N}_{0} = \{0,1\}$. Given a configuration $\eta = (\eta(n): n \in \mathbb{Z}) \in \Omega$, where $\Omega = \mathcal{A}^{\mathbb{Z}}$ is the state space for the GPCA $E_{p,q}$, the state $E_{p,q}\eta(n)$ of the site $n$ under the application of $E_{p,q}$ is a random variable that is a function of $(\eta(n+i): i \in \mathcal{N}_{1}\}$, i.e.\ of the states $\eta(n+1)$ and $\eta(n+2)$ of sites $n+1$ and $n+2$, when $n$ is odd. On the other hand, the random variable $E_{p,q}\eta(n)$ is a function of $(\eta(n+i): i \in \mathcal{N}_{0}\}$, i.e.\ of the states $\eta(n)$ and $\eta(n+1)$ of sites $n$ and $n+1$, when $n$ is even. In particular both the random variables $E_{p, q}\eta(2n-1)$ and $E_{p,q}\eta(2n)$ are equidistributed since they are functions of the states $\eta(2n)$ and $\eta(2n+1)$.

For the GPCA that we are concerned with in this paper, stochastic update rule $\varphi_{p,q}$ is defined as follows: 
\begin{equation}\label{PCA_rule_1}
 \varphi_{p,q}(0, 0, b) =
  \begin{cases} 
   p & \text{if } b = 0, \\
   1-p & \text{if } b = 1,
  \end{cases}
\end{equation}
and
\begin{equation}\label{PCA_rule_2}
 \varphi_{p,q}(a_{0}, a_{1}, b) =
  \begin{cases} 
   1-q & \text{if } b = 0, \\
   q & \text{if } b = 1,
  \end{cases} \quad \text{for all } (a_{0}, a_{1}) \in \mathcal{A}^{2} \setminus \{(0,0)\}.
\end{equation}

Note that even though the definition of the stochastic rule is independent of the individual vertices, following \ref{general_update_rule_eq}, the way the individual vertices use this stochastic rule varies from vertex to vertex, and in our GPCA, $E_{p,q}$, the usage of this rule depends on the parity of the vertex.

\subsection{Description of our envelope GPCA}\label{subsec:envelope_gpca} 
The envelope $\E_{p,q}$ to the GPCA $E_{p,q}$ is obtained by first extending the alphabet $\mathcal{A}$ to $\hat{\mathcal{A}} = \{0,?,1\}$, and then introducing the local update rules via the stochastic matrix $\hat{\varphi}_{p,q}$ defined as follows:
\begin{equation}\label{envelope_PCA_rule_1}
 \widehat{\varphi}_{p,q}(0, 0, b) =
  \begin{cases} 
   p & \text{if } b = 0, \\
   1-p & \text{if } b = 1,
  \end{cases}
\end{equation}
\begin{equation}\label{envelope_PCA_rule_2}
 \widehat{\varphi}_{p,q}(a_{0}, a_{1}, b) =
  \begin{cases} 
   1-q & \text{if } b = 0, \\
   q & \text{if } b = 1,
  \end{cases} \quad \text{for all } (a_{0}, a_{1}) \in \hat{\mathcal{A}}^{2} \setminus \{0,?\}^{2}
\end{equation}
and
\begin{equation}\label{envelope_PCA_rule_3}
 \widehat{\varphi}_{p,q}(a_{0}, a_{1}, b) =
  \begin{cases} 
   p & \text{if } b = 0, \\
   q & \text{if } b = 1,\\
   r = 1-p-q & \text{if } b = ?,
  \end{cases} \quad \text{for all } (a_{0}, a_{1}) \in \{0,?\}^{2} \setminus (0,0).
\end{equation}
The neighbourhoods $\mathcal{N}_{1}$ and $\mathcal{N}_{0}$ corresponding to odd and even vertices remain the same as in $E_{p,q}$. We reiterate here that this means that for an odd $n\in\mathbb Z$, $\E_{p,q}\eta(n)$ depends on $\eta(n+1)$ and $\eta(n+2)$ whereas for even $n\in\mathbb Z$, $\E_{p,q}\eta(n)$ depends on $\eta(n)$ and $\eta(n+1)$. In particular, $\E_{p,q}\eta(2n-1), \E_{p, q}\eta(2n)$ both depend on $\eta(2n)$ and $\eta(2n+1)$.

\subsection{Connection between our game and the envelope GPCA}\label{subsec:connection} In this section, we delve into the underlying connection between the game we play and our GPCA. As stated in the previous section, recall that we play our game on $\mathbb Z^2$. Suppose every vertex of $\mathbb Z^2$ has been assigned to be a trap/target/open with corresponding probabilities. We define sets of vertices $W$, $L$, and $D$ which we will need to establish this connection. We say that a vertex $(x, y)$ is in $W$ if the player who plays the first round wins the game when the token is placed at $(x, y)$ at the beginning of the game. Similarly, we say that a vertex $(x, y)$ is in $L$ if the person who plays first, loses the game when the token is placed at $(x,y)$ at the start of the game. Finally $D$ is the set of vertices $(x,y)$ such that when the token is initially kept at $(x, y)$, the game is results in a draw. We follow the convention that if the vertex is a trap, then it is assigned a label $W$ and if it is a target, it is assigned a label $L$. 

For $k \in \mathbb Z$, let $H_k = \{(x, k)\in \mathbb Z^2: x\in \mathbb Z\}$ be the horizontal line at height $k$. We claim that if, for any arbitrary realization of the graph into traps/targets/open sites, we know the labelling of vertices (into $W, L$ and $D$) in the horizontal line $H_{k+1}$, then, using this information and the pre-assigned trap/target/open labelling, we can deduce the labelling of vertices in the horizontal line $H_k$. We note that since the neighbours of $(2x-1, y)$ and $(2x, y)$ are the same, it suffices to give arguments for $(2x, y)$.:

\begin{enumerate}
    \item Suppose both of the vertices $(2x, y+1)$ and $(2x+1, y+1)$ are labelled $W$, then no matter what move the first player makes, the second player is guaranteed to win the game. Hence, the vertex $(2x, y)$ should be labelled $L$ if it is open. Otherwise, if it is a trap, it gets label $W$, which happens with probability $p$.
    \item If at least one of the vertices $(2x, y+1)$ and $(2x+1, y+1)$ is labelled $L$ then the first player can always move to this vertex in order to ensure victory in the game. This means that $(2x, y)$ should be labelled $W$ is it is open. Otherwise, if it is a target, it gets label $L$, which happens with probability $q$.
    \item If none of the vertices $(2x, y+1)$ and $(2x+1, y+1)$ is labelled $L$ but at least one of them is labelled $D$, then the first player should move to this vertex to ensure a draw as the best case scenario. Hence, the vertex $(2x, y)$ should be labelled as $D$ if it is open. Otherwise, it gets labelled $W$ if it is a trap, which happens with probability $p$ and it gets label $L$ if it is a target which happens with probability $q$.
\end{enumerate}

We identify the symbols $0$ with the label $W$, $1$ with $L$ and $?$ with $D$. With this assignment of symbols in hand, it is clear that if $\eta \in \{0, 1, ?\}^{\mathbb Z}$ is a configuration of symbols on $H_{k+1}$ then the random configuration on $H_k$ is given by $\E_{p, q} \eta$.  Finally, the connection between the GPCA and the game we study is established via the following results.

\begin{prop}\label{prop:ergodic_env}
The GPCA $E_{p,q}$ is ergodic if and only if the corresponding GPCA $\widehat{E}_{p,q}$ if ergodic.
\end{prop}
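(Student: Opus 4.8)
The plan is to exploit two structural features of the envelope $\E_{p,q}$. First, the set of $?$-free configurations $\{0,1\}^{\mathbb Z}$ is closed and $\E_{p,q}$-invariant, and on it $\E_{p,q}$ acts exactly as $E_{p,q}$: this is immediate from \eqref{envelope_PCA_rule_1}--\eqref{envelope_PCA_rule_3}, since $\widehat{\varphi}_{p,q}$ restricted to inputs in $\{0,1\}$ coincides with $\varphi_{p,q}$ and never outputs $?$. Second, $\E_{p,q}$ is monotone for the information order $\preceq$ on $\widehat{\mathcal A}=\{0,1,?\}$ in which $0,1$ are the two minimal elements and $?$ is the top. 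I would record the latter as a preliminary lemma: realising each site-update through a single uniform random variable, with thresholds read off from \eqref{envelope_PCA_rule_1}--\eqref{envelope_PCA_rule_3} (the crucial point being that the $?$-interval in rule \eqref{envelope_PCA_rule_3} has length exactly $r=1-p-q$), produces a grand coupling under which $\eta\preceq\eta'$ forces $\E_{p,q}\eta\preceq\E_{p,q}\eta'$. Writing $\Delta$ for the all-$?$ configuration (the $\preceq$-maximum of $\Omega$), this coupling makes the configurations obtained by iterating $\E_{p,q}$ from $\Delta$ \emph{from further and further in the past} monotone decreasing, so they converge almost surely to a limit $\eta_\infty$ whose law $\widehat\mu_\infty$ is stationary for $\E_{p,q}$, and $\Prob[\E_{p,q}^t\Delta(n)=?]$ decreases to $\Prob[\eta_\infty(n)=?]=:\theta$.

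The easy direction is $\E_{p,q}$ ergodic $\Rightarrow E_{p,q}$ ergodic. Let $\widehat\mu$ be the unique stationary measure of $\E_{p,q}$ and let $\nu$ be supported on $\{0,1\}^{\mathbb Z}$. Then $\E_{p,q}^t\nu=E_{p,q}^t\nu$ is supported on the closed set $\{0,1\}^{\mathbb Z}$ for every $t$ and converges weakly to $\widehat\mu$, so the portmanteau theorem gives $\widehat\mu(\{0,1\}^{\mathbb Z})=1$. Hence $\widehat\mu$ is a stationary measure for $E_{p,q}$; it is the only one, since any $E_{p,q}$-stationary measure is $\E_{p,q}$-stationary; and $E_{p,q}^t\nu\to\widehat\mu$ for every such $\nu$. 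Thus $E_{p,q}$ is ergodic.

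For the converse I would first use monotonicity to reduce the ergodicity of $\E_{p,q}$ to the single scalar assertion $\theta=0$, i.e.\ $\Prob[\E_{p,q}^t\Delta(n)=?]\to0$. Granting this: any stationary $\widehat\nu$ satisfies $\widehat\nu\preceq\Delta$, so the coupling gives $\Prob_{\widehat\nu}[\text{site }n=?]=\Prob_{\E_{p,q}^t\widehat\nu}[\text{site }n=?]\le\Prob_{\E_{p,q}^t\Delta}[\text{site }n=?]\to0$, forcing $\widehat\nu$ to be $?$-free, hence equal (by the easy direction's uniqueness) to the unique $E_{p,q}$-stationary measure $\mu$; and for an arbitrary start $\widehat\nu_0$, minimality of $0,1$ under the coupling gives $\E_{p,q}^t\widehat\nu_0=\E_{p,q}^t\Delta$ on the asymptotically full set of determined sites, whence $\E_{p,q}^t\widehat\nu_0\to\widehat\mu_\infty=\mu$. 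So the whole converse comes down to deducing, from ergodicity of $E_{p,q}$, that the all-$?$ envelope asymptotically loses every $?$.

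This last step is the main obstacle and is the genuine crux of the proposition. The event $\{\E_{p,q}^t\Delta(n)=?\}$ asserts that the value at $(n,0)$ is \emph{undetermined over the entire family of time-$(-t)$ boundary configurations under the shared randomness}, a pathwise, coupling-level statement, whereas ergodicity of $E_{p,q}$ only supplies convergence of marginals $E_{p,q}^t\delta_\xi\to\mu$ for each fixed $\xi$; bridging this distributional-versus-pathwise gap is where the real work lies. I would argue the contrapositive: a positive limiting $?$-density means that for arbitrarily large $t$ the envelope retains sites whose value genuinely depends on the time-$(-t)$ boundary, and I would try to promote this into an initial-condition-dependent discrepancy for $E_{p,q}$ that survives in the limit, thereby contradicting the convergence $E_{p,q}^t\nu\to\mu$ guaranteed by ergodicity. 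Carrying this out cleanly is delicate precisely because $E_{p,q}$ itself is \emph{not} monotone in the natural order $0<1$ (its rule \eqref{PCA_rule_1}--\eqref{PCA_rule_2} has a NAND-like flavour), so one cannot simply sandwich between the all-$0$ and all-$1$ trajectories; here I expect to need the explicit form of $\varphi_{p,q}$ together with the directed, finite-range geometry of the dependence cone, and this is the step I would budget the most effort for.
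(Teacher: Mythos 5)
Your easy direction is correct and matches the paper's (which simply asserts it; your portmanteau argument supplies the detail). The converse, however, contains a genuine gap --- one you candidly flag yourself: the entire argument funnels into the claim that ergodicity of $E_{p,q}$ forces $\Prob[\E_{p,q}^t\Delta(n) = ?]\to 0$, and you never prove it, only sketch a contrapositive you ``would try.'' The difficulty is structural: your information order $\trianglelefteq$ (with $0,1$ minimal and $?$ maximal, Lemma~\ref{lemma_basic}(ii)) gives you a monotone upper trajectory started from the all-$?$ configuration $\Delta$, but it provides no comparable \emph{lower} trajectory for a general initial law, since distinct $?$-free configurations are $\trianglelefteq$-incomparable. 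That is exactly why you are left having to bridge a pathwise, coupling-level statement about undetermined sites with the purely distributional information that ergodicity of $E_{p,q}$ supplies --- and, as you note, $E_{p,q}$ is not monotone for $0<1$, so there is no obvious sandwich to fall back on.

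The idea you are missing is the paper's Lemma~\ref{lemma_basic}(i): with respect to the \emph{total} order $0 < \mathord{?} < 1$, the envelope is order-\emph{reversing}, i.e.\ $\mu \leq \nu$ implies $\E_{p,q}\mu \geq \E_{p,q}\nu$. Under this order every probability measure $\mu$ on $\{0,?,1\}^{\mathbb Z}$ trivially satisfies $\delta_0 \leq \mu \leq \delta_1$, where $\delta_0,\delta_1$ are the point masses at the all-$0$ and all-$1$ configurations. Both extremes are $?$-free, and on $\{0,1\}^{\mathbb Z}$ the envelope acts exactly as $E_{p,q}$ (your own first observation), so $\E_{p,q}^t\delta_0 = E_{p,q}^t\delta_0$ and $\E_{p,q}^t\delta_1 = E_{p,q}^t\delta_1$. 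Anti-monotonicity applied $t$ times yields the parity-alternating sandwich $E_{p,q}^{2k}\delta_0 \leq \E_{p,q}^{2k}\mu \leq E_{p,q}^{2k}\delta_1$ and $E_{p,q}^{2k+1}\delta_1 \leq \E_{p,q}^{2k+1}\mu \leq E_{p,q}^{2k+1}\delta_0$; the order-reversal, applied twice, is order-preserving, which is precisely what neutralises your worry about the non-monotonicity of $E_{p,q}$. Ergodicity of $E_{p,q}$ sends both boundary trajectories to the unique stationary law $\nu$, and the squeeze forces $\E_{p,q}^t\mu \to \nu$ for every $\mu$, giving ergodicity of $\E_{p,q}$ in a few lines. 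In this route the fact that the $?$-density of the iterated envelope vanishes is a \emph{corollary} (the unique stationary law is $?$-free), not an input --- the step you budgeted the most effort for never needs to be performed.
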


\begin{prop}\label{prop:game_ergodic}
For every $(p,q) \in \mathcal{S}$, our game has probability $0$ of ending in a draw if and only if the GPCA $E_{p,q}$ is ergodic.
\end{prop}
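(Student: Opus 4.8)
The plan is to reduce everything to the envelope GPCA $\widehat{E}_{p,q}$ through Proposition~\ref{prop:ergodic_env}, so that it suffices to prove that the game has draw probability $0$ if and only if $\widehat{E}_{p,q}$ is ergodic. The central object is the sequence of measures $\nu_{t} := \widehat{E}_{p,q}^{\,t}\delta_{?}$, where $\delta_{?}$ is the point mass on the all-$?$ configuration. Under the identification $0 \leftrightarrow W$, $1 \leftrightarrow L$, $? \leftrightarrow D$ and the row-to-row propagation described in \ref{subsec:connection}, $\nu_{t}$ is precisely the law of the $W/L/D$ labelling induced on $H_{0}$ when every vertex of $H_{t}$ is declared a draw and rows $H_{t-1},\dots,H_{0}$ are then filled in by the game recursion. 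Equivalently, $\nu_{t}(\eta(0) = ?)$ is the probability that the origin is a draw in the \emph{finite-horizon} game in which the token is declared to force a draw as soon as it has made $t$ moves.

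First I would prove the bridging identity $\Prob[\text{draw}] = \lim_{t\to\infty}\nu_{t}(\eta(0) = ?)$. The key structural fact is monotonicity: in a fixed trap/target/open environment, any vertex that is decided ($W$ or $L$) under the horizon-$t$ recursion is decided with the same value under every larger horizon, since granting further moves can only help a player who already possesses a forcing strategy; one checks this by induction on $t$ using the recursion of \ref{subsec:connection}, which is monotone in the information order with $?$ maximal. Hence the events $\{(0,0) = ?\text{ at horizon }t\}$ are nested and decreasing, so $\nu_{t}(\eta(0)=?)$ decreases to a limit, and I would identify this limit with $\Prob[(0,0)\in D]$ by showing $\bigcap_{t}\{(0,0)=?\text{ at horizon }t\} = \{(0,0)\in D\}$. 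The content here is that a forced win is always realised in finitely many moves, so that a vertex of $W\cup L$ becomes decided at some finite horizon; this uses that every play path strictly increases height and that winning means reaching a target or trap, which happens at a finite step along each line of play.

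Next I would establish $\widehat{E}_{p,q}\text{ ergodic} \iff \nu_{t}(\eta(0)=?) \to 0$. For the forward direction, $E_{p,q}$ acts continuously on the compact space $\{0,1\}^{\mathbb Z}$ and hence admits a stationary measure $\mu$ by the Krylov--Bogolyubov argument; since $\widehat{E}_{p,q}$ agrees with $E_{p,q}$ on configurations containing no $?$, this $\mu$ is also stationary for $\widehat{E}_{p,q}$ and gives zero mass to $\{\eta(0)=?\}$. If $\widehat{E}_{p,q}$ is ergodic then $\mu$ is its unique stationary measure and $\nu_{t}\to\mu$ weakly, and as $\{\eta(0)=?\}$ is clopen this forces $\nu_{t}(\eta(0)=?)\to\mu(\eta(0)=?) = 0$. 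For the converse, $\delta_{?}$ is maximal in the information order, and under the standard monotone coupling the set of sites where two copies of $\widehat{E}_{p,q}$ started from arbitrary initial measures disagree at time $t$ is dominated by the $?$-set of $\nu_{t}$; since odd and even sites are equidistributed (being functions of the same pair of sites) and $\nu_{t}$ is invariant under horizontal translation by two, $\nu_{t}(\eta(0)=?)\to 0$ in fact gives $\nu_{t}(\eta(n)=?)\to 0$ for every $n$, whence the two copies agree at each site with probability tending to $1$, forcing a common weak limit and thus ergodicity.

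Finally I would assemble the pieces: by the bridging identity the draw probability vanishes if and only if $\nu_{t}(\eta(0)=?)\to 0$, which holds if and only if $\widehat{E}_{p,q}$ is ergodic, which by Proposition~\ref{prop:ergodic_env} holds if and only if $E_{p,q}$ is ergodic. I expect the main obstacle to be the bridging identity of the second paragraph, specifically the finite-horizon approximation: proving that the monotone limit of the finite-horizon draw events is exactly the infinite-game draw event requires the finite-time decidability of forced wins, and care is needed because the number of moves to a forced win is not uniformly bounded across the opponent's responses. The monotone-coupling step in the converse of the third paragraph is the other delicate point, since it re-uses the envelope comparison that underlies Proposition~\ref{prop:ergodic_env}.
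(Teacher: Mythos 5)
Your proposal is correct and takes essentially the same route as the paper, which proves this proposition simply by invoking the proof of Proposition 2.2 of \cite{holroyd2019percolation}: the identification of the draw probability with $\lim_{t\to\infty}\widehat{E}_{p,q}^{\,t}\delta_{?}\left(\eta(0)=?\right)$ via the monotone finite-horizon truncation, together with the envelope sandwich coupling in the order with $?$ maximal, are exactly the ingredients of that argument. The two delicate points you flag are genuine but resolve as you expect --- in particular, finite-time decidability of forced wins follows from K\"onig's lemma applied to the finitely branching tree of plays consistent with a winning strategy (each move increases the height by one, so the not-yet-won subtree, having no infinite path, is finite and the win occurs within a bounded horizon) --- so nothing essential is missing.
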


Before stating the next theorem, we would like to delve on its importance and connections with the above stated propositions. We will show in the next section that for $\E_{p, q}$ to be ergodic, it suffices to show that it has no stationary distribution which assigns positive probability to the occurrence of $?$. Once we have shown that the GPCA is ergodic, using the above propositions, it follows that our game has $0$ probability of draw. From Proposition~\ref{prop:ergodic_env} and Proposition~\ref{prop:game_ergodic}, it is easy to see that Theorem~\ref{thm:main_theorem_ergodic_gpca} and Theorem~\ref{thm:main_theorem} are equivalent. In order to establish Theorem~\ref{thm:main_theorem_ergodic_gpca}, we establish Theorem~\ref{thm:main_theorem}.

\begin{theorem}\label{thm:main_theorem}
The envelope GPCA $\widehat{E}_{p,q}$ admits no stationary distribution $\mu$ that assigns positive probability to the occurrence of $?$, for following ranges of $p$ and $q$:
\begin{enumerate}
    \item $ q=0, p > p_0,$ where $p_0 \approx 0.215$ is the unique real root of $p^5 - 5p^4 + 9p^3 - 8p^2 + 6p -1$.
        \item $q > \max\left\{ \frac{-\sqrt{p^2 +2}-p+2}{2}, 0\right\}, p\geq 0$.
\end{enumerate}

\end{theorem}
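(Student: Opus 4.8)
The plan is to prove Theorem~\ref{thm:main_theorem} directly, since by Proposition~\ref{prop:ergodic_env} and Proposition~\ref{prop:game_ergodic} it is equivalent to the ergodicity statement Theorem~\ref{thm:main_theorem_ergodic_gpca}. I would argue by contradiction. Suppose that for some $(p,q)$ in one of the two stated ranges there is a stationary distribution $\mu$ for $\E_{p,q}$ with $\mu(\eta(0) = {?}) > 0$. Since the update rule commutes with the spatial shift by $2$ (and since $\E_{p,q}\eta(2n-1)$ and $\E_{p,q}\eta(2n)$ are equidistributed, both being functions of the common pair $(\eta(2n),\eta(2n+1))$), a standard Cesàro averaging of the shifts by $2$ together with weak compactness lets me assume that $\mu$ is invariant under the shift by $2$, so that the single-site density $\rho := \mu(\eta(0) = {?}) > 0$ and all finite-pattern frequencies are well defined and translation-consistent.

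The engine of the argument is the weight-function (potential) method of \cite{holroyd2019percolation, bhasin2022class}, whose starting point is the qualitative fact that ${?}$ is never created from a neighbourhood free of ${?}$: by \eqref{envelope_PCA_rule_1}--\eqref{envelope_PCA_rule_3}, a site is assigned ${?}$ only when its two relevant neighbours both lie in $\{0,{?}\}$ and are not both $0$ (so that at least one is already ${?}$), and then only with probability $r = 1-p-q$. As a warm-up that also isolates the difficulty, stationarity gives $\rho = \Prob[\E_{p,q}\eta(0) = {?}] = r\,\Prob[(\eta(0),\eta(1)) \in \{0,{?}\}^2\setminus\{(0,0)\}]$, and since the latter event forces at least one neighbour to equal ${?}$, a union bound yields $\rho \leq 2r\rho$. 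Thus $\rho > 0$ already forces $r \geq \tfrac12$, i.e.\ $p+q \leq \tfrac12$; equivalently the conclusion holds for free whenever $p + q > \tfrac12$. The whole content of the theorem therefore lies in the regime $p+q \leq \tfrac12$, where this crude single-site bound is far from tight because it ignores the correlations between adjacent cells.

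To capture those correlations I would seek a local weight function $W(\eta) = \sum_{P} c_P\, \mathbf 1[\eta \text{ displays the word } P \text{ at the origin}]$, a finite nonnegative combination of indicators of short words $P$ over $\{0,{?},1\}$, and consider the expected per-site weight $\Phi(\mu) = \mathbb E_\mu[W(\eta)]$. Expanding $\Phi(\E_{p,q}\mu)$ via \eqref{general_update_rule_eq} and the explicit rules expresses it as a linear functional of the finite-pattern frequencies of $\mu$ (the parity-based neighbourhoods make each output site depend on an overlapping input pair, so the bookkeeping lives on blocks of consecutive cells). The aim is to choose the coefficients $c_P$ so that
\[
\Phi(\E_{p,q}\mu) \;\leq\; \Phi(\mu) - c\,\rho \qquad \text{for some } c = c(p,q) > 0,
\]
whence stationarity $\E_{p,q}\mu = \mu$ forces $\rho = 0$, the desired contradiction. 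After subtracting $\Phi(\mu)$ and collecting terms, this requirement becomes the demand that a fixed linear functional of the pattern frequencies be nonpositive over the polytope of admissible (consistency-constrained) frequency vectors, which is a finite linear-programming feasibility condition on the $c_P$. The set of $(p,q)$ for which a feasible nonnegative assignment exists is precisely what yields the two ranges: along $q = 0$ the feasibility boundary degenerates exactly where the quintic $p^5 - 5p^4 + 9p^3 - 8p^2 + 6p - 1$ vanishes, giving the threshold $p_0 \approx 0.215$, while for general $q$ it degenerates along $q = \tfrac12\bigl(-\sqrt{p^2+2} - p + 2\bigr)$.

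I expect the main obstacle to be twofold. First, one must identify the correct finite family of words $P$ and the structure of their weights: a window too short cannot record the adjacent-cell correlations that push the threshold below the crude line $p+q = \tfrac12$, whereas a window too long makes the verification intractable by hand, so a computer-assisted search in the spirit of \cite{bresler2022linear} is likely required to pin down the weights realising the exact thresholds. Second, and most delicate, is the rigorous justification of the per-site computation and the strict-decrease inequality on the infinite lattice $\mathbb Z$ --- correctly identifying the consistency constraints defining the frequency polytope and controlling edge contributions --- together with the algebraic verification that the optimised inequality holds throughout the two stated regions and fails on their boundaries, which is the origin of the explicit quintic and square-root thresholds.
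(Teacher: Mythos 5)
Your proposal correctly identifies the engine the paper uses --- the weight-function method, applied after reducing to stationary measures with suitable translation/reflection symmetries --- but as written it is a plan for a proof, not a proof: the entire technical content of the theorem is located exactly in the step you defer. You never exhibit the weight function, never carry out the bookkeeping that turns $\Phi(\mu)-\Phi(\E_{p,q}\mu)$ into a nonnegative combination of cylinder-set probabilities, and instead \emph{assert} that the LP feasibility boundary ``degenerates exactly'' at the quintic root $p_0$ and at the curve $q=\tfrac12(-\sqrt{p^2+2}-p+2)$. That assertion is unsupported, and it is also an overclaim: the paper proves only sufficiency (its particular weight functions work on the stated ranges), not that feasibility fails outside them, and nothing rules out a larger ergodicity region. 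Moreover, the actual proof of part (i) does not come from a single feasibility condition: the paper runs an iterative construction $w_0,\dots,w_5$ (starting from $w_0(\mu)=2\mu(?0)_1+\mu(??)_1$ and repeatedly adding or subtracting the offending cylinder terms via Lemma~\ref{lemma:e_pq}), and the final difference $w_5(\mu)-w_5(\E_p\mu)$ has all cylinder coefficients positive only on $(p_0,p_1)$ with $p_1\approx 0.555$ (the quintic $p^5-5p^4+9p^3-8p^2+6p-1$ appearing as the coefficient of $\mu(00\widehat{1*}\widehat{**})_0$); the range $p>\tfrac12$ is covered separately by the intermediate function $w_2$, and the theorem's range $p>p_0$ is the union of the two. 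A one-shot ``feasible LP on a single window'' narrative cannot recover this two-regime structure without actually doing the construction. Your warm-up bound $\rho\le 2r\rho$ is correct (it matches the paper's identity $\mu(?)=r\,\mu(\widehat{**})_0$ plus a union bound) but it carries none of the load.

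There is also a genuine flaw in your reduction step: Ces\`aro-averaging the shifts by $2$ of a stationary $\mu$ and passing to a weak limit preserves stationarity and shift-$2$-invariance, but it need not preserve positivity of the $?$-density --- if $\mu(?)_j>0$ only on a sparse set of sites $j$, the averaged limit can have $\bar\mu(?)_0=\bar\mu(?)_1=0$, and your contradiction evaporates. The paper avoids this by a different route: the monotonicity statements of Lemma~\ref{lemma_basic} (in particular the order $0\triangleleft ?\triangleright 1$ in which $?$ is maximal), Lemma~\ref{lemma:suffice} showing that properties (i)--(iv) are preserved by $\E_{p,q}$, and the argument of Proposition 2.3 of \cite{holroyd2019percolation}, which together reduce the problem to stationary measures dominating all others in the $\trianglelefteq$ order and satisfying (i)--(iv), so that killing $?$ for those measures kills it for all. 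To complete your proposal you would need both this corrected reduction and the explicit weight functions with the verification of positivity of their coefficients (the quintic and the quadratic $-2q^2+2q(2-p)+2p-1$ whose root gives the square-root curve), which is precisely what the paper's Section~\ref{sec:proof} supplies.
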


\begin{remark}
We note that when $ p = 0$, Theorem~\ref{thm:main_theorem} tells us that whenever $q > \frac{2 - \sqrt{2}}{2} \approx 0.2929$ the GPCA $\E_{p, q}$ admits no stationary distribution which assigns a positive probability to the occurrence of $?$. The region covered by Theorem~\ref{thm:main_theorem}(ii) is shown in the figure \ref{fig:region_thm}.

\begin{figure}\label{fig:region_thm}
        \centering
        \includegraphics[width=0.65\linewidth]{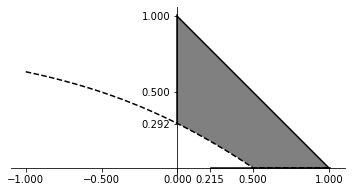}
        \caption{Region covered by Theorem~\ref{thm:main_theorem} (dotted curve is not included in the region)}
    \end{figure}

\end{remark}

\begin{remark}
Note that the polynomial $p^5 - 5p^4 + 9p^3 - 8p^2 + 6p -1$ assumes positive values for $p>p_0$. This polynomial, along with some other polynomials shows up in \ref{coeff_poly} where in order to obtain our result, we want all those polynomials to take positive values. It turns out that all the polynomials are positive in $ ( p_0, p_1) $ where $p_1\approx 0.555$ and is the second largest root of $p^3-2p^2-p+1$.  
\end{remark}


\section{Some preliminary lemmas}\label{sec:lemmas}

\begin{lemma}\label{lemma_basic}
Let $\mu$ and $\nu$ be two probability distributions on $\{0, ? , 1\}^{\mathbb Z}$.
\begin{enumerate}
    \item\label{lemma:basic_1} If $\mu \leq \nu$, where $\leq$ is stochastic domination with respect to the coordinate-wise partial order induced by $0 < ? < 1$, then $E_{p,q}\mu \geq E_{p,q}\nu$.
    \item\label{lemma:basic_2} If $\mu \trianglelefteq \nu$, where $\trianglelefteq$ denotes stochastic domination with respect to the coordinate-wise partial order induced by $0 \triangleleft ? \triangleright 1$, then $E_{p,q}\mu \trianglelefteq E_{p,q}\nu$.
\end{enumerate}

\end{lemma}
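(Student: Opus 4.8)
The plan is to reduce both assertions to a single-site monotonicity property of the envelope update rule $\widehat{\varphi}_{p,q}$ and then to propagate it to the whole lattice by a standard coupling argument; throughout I read the statement as concerning the envelope dynamics on $\{0,?,1\}^{\mathbb Z}$, so that $E_{p,q}$ acts through $\widehat{\varphi}_{p,q}$. First I would invoke Strassen's theorem: the hypothesis $\mu \le \nu$ (respectively $\mu \trianglelefteq \nu$) for the coordinatewise product order is equivalent to the existence of a coupling $(\eta,\eta')$ with $\eta \sim \mu$, $\eta' \sim \nu$, and $\eta \le \eta'$ (respectively $\eta \trianglelefteq \eta'$) holding coordinatewise almost surely. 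This is legitimate because $\{0,?,1\}^{\mathbb Z}$ is compact metrizable and both product orders are closed.

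Given such a coupled pair, I would build a coupling of $E_{p,q}\eta$ and $E_{p,q}\eta'$ site by site. For a fixed configuration the updated states $\{E_{p,q}\,\cdot\,(n): n \in \mathbb Z\}$ are conditionally independent, and the neighbourhood of each site merely selects two coordinates of the configuration; hence if $\eta \le \eta'$ then at every site $n$ the input pair read from $\eta$ is $\le$ (coordinatewise) the input pair read from $\eta'$, and likewise for $\trianglelefteq$. It therefore suffices to couple, independently across sites, the two single-site output laws so that the desired order between the outputs holds almost surely; the product of these couplings realizes the required coordinatewise order between $E_{p,q}\eta$ and $E_{p,q}\eta'$, which by Strassen again yields the claimed stochastic comparison of $E_{p,q}\mu$ and $E_{p,q}\nu$. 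Existence of each single-site coupling is once more Strassen, so the whole argument comes down to verifying single-site stochastic domination of the output laws.

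For the single-site verification I would tabulate the three possible output laws, writing each as the triple $(\,\cdot(0),\cdot(?),\cdot(1)\,)$: the input $(0,0)$ gives $(p,0,1-p)$; any input with neither coordinate equal to $1$ but at least one equal to $?$ gives $(p,r,q)$ with $r=1-p-q$; and any input containing a $1$ gives $(1-q,0,q)$. For part (i), domination in the total order $0<?<1$ is tested on the up-sets $\{1\}$ and $\{?,1\}$, and I would check that as the input increases coordinatewise these laws move in the order $(p,0,1-p) \ge (p,r,q) \ge (1-q,0,q)$, the inequalities reducing to $q \le 1-p$, i.e.\ to $p+q \le 1$. Since increasing an input coordinate in the order $0<?<1$ can only push the input from the first class toward the second and then the third, the output law can only decrease, giving the antitone conclusion $E_{p,q}\mu \ge E_{p,q}\nu$. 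For part (ii), domination in the order $0 \triangleleft ? \triangleright 1$ is tested on the up-sets $\{?\}$, $\{0,?\}$, $\{1,?\}$; a comparable move $a \trianglelefteq a'$ replaces some $0$'s and $1$'s by $?$'s, so the only class transitions that occur are from $(0,0)$ into the $?$-class and, crucially, from an input containing a $1$ into the $?$-class. I would then verify the three up-set inequalities for the two transitions $(p,0,1-p) \trianglelefteq (p,r,q)$ and $(1-q,0,q) \trianglelefteq (p,r,q)$; each reduces to $r \ge 0$ together with the identities $q+r = 1-p$ and $p+r = 1-q$, once again exactly the condition $p+q \le 1$.

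The only genuine subtlety, and the step I would treat most carefully, is the enumeration of comparable input pairs in part (ii). Unlike the totally ordered case, here an input containing a $1$ can be dominated in $\trianglelefteq$ by an input in the $?$-class, so the verification must include the passage from the law $(1-q,0,q)$ up to $(p,r,q)$ and not merely the passage from $(p,0,1-p)$; overlooking this transition would leave a real gap. Once all comparable transitions are listed and the three up-set inequalities are confirmed, the monotone (respectively antitone) single-site couplings exist, and the site-independent product coupling completes both parts.
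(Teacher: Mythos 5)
Your proof is correct: the three one-site output laws $(p,0,1-p)$, $(p,r,q)$, $(1-q,0,q)$ are read off the envelope rule accurately, the up-set verifications reduce to $p+q\leq 1$ exactly as you say, and the transition you single out as the genuine subtlety in part (ii) --- that an input containing a $1$ can be $\trianglelefteq$-dominated by an input in the $?$-class, forcing the check $(1-q,0,q)\trianglelefteq(p,r,q)$ --- is precisely the third case in the paper's own analysis. Where you differ from the paper is the coupling mechanism: the paper never invokes Strassen's theorem. It couples the two chains through the common random environment, fixing a realization $\omega$ of traps/targets/open sites on the row being updated, noting that conditionally on $\omega$ the envelope update is a deterministic map $\E_{\omega}$, and proving the realization-wise monotonicity $\E_{\omega}\eta \geq \E_{\omega}\tilde{\eta}$ (resp.\ $\E_{\omega}\eta \trianglelefteq \E_{\omega}\tilde{\eta}$) for arbitrary configurations $\eta \leq \tilde{\eta}$ (resp.\ $\eta \trianglelefteq \tilde{\eta}$) by the same three-class case analysis that you carry out at the level of laws. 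The common-$\omega$ coupling is a concrete instance of the abstract single-site monotone couplings whose existence you extract from Strassen, so the two arguments have identical combinatorial content. Your version buys a cleaner separation of concerns and makes fully explicit the lifting from configurations to measures (which the paper leaves implicit when passing from its pointwise statement to stochastic domination of $E_{p,q}\mu$ and $E_{p,q}\nu$); the paper's version is more elementary, avoiding Strassen at both the one-site and product levels, and yields the stronger pointwise (per-realization) monotonicity directly. One cosmetic point: in part (i) an increase of a coordinate can jump directly from the class $(0,0)$ to the class containing a $1$ (e.g.\ $(0,0)\leq(1,0)$), skipping the middle class, so your chain $(p,0,1-p)\geq(p,r,q)\geq(1-q,0,q)$ should be closed off by transitivity of stochastic domination --- which of course it is.
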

\begin{proof}
Given a a configuration $(\eta(\mathbf x) \in \{0, 1, ?\}: \mathbf x \in H_{k+1})$ and a configuration $\omega$ of traps, targets and open sites in $H_k$, we define $\E_{\omega}\eta$ to be the deterministic configuration of symbols $\{0, ?, 1\}$ coming from the relations described in section~\ref{subsec:connection}. We first fix two configurations $( \eta(\mathbf x) : \mathbf x \in H_{k+1})$ and ($\tilde{\eta}(\mathbf x) : \mathbf x \in H_{k+1} )$, in such a way that $\eta(\mathbf x) \leq \tilde{\eta}(\mathbf x)$ for all $\mathbf x \in H_{k+1}$ where $\leq$ is the coordinate-wise partial order on configurations induced by $0 < ? < 1$. Let us fix a configuration $\omega$ of traps, targets and open sites in $H_k$. If $\mathbf x \in H_k$ is not open then by our convention $\E_{\omega}\eta(\mathbf x) = \E_{\omega}\tilde{\eta}(\mathbf x)$. Suppose $\mathbf x\in H_{k}$ is open and let $\mathbf x = (x, k)$. We assume, without loss of generality, that $x$ is even (the same set of arguments apply in the case $x$ is odd as $(x, k)$ and $(x+1, k)$ have the same set of neighbours). We consider the following cases:
\begin{enumerate}
    \item Suppose $\tilde{\eta}(x, k+1) = \tilde{\eta}(x+1, k+1) = 0$. Since $\eta(\mathbf x) \leq \widehat{\eta}(\mathbf x)$ for all $\mathbf x \in H_{k+1}$, it follows that $\eta(x, k+1) = \eta(x+1, k+1) = 0$. By definition of $\E_{\omega}$, this implies that $\E_{\omega}\tilde{\eta}(\mathbf x) = \E_{\omega}\eta(\mathbf x) = 1$.
    \item Suppose $(\tilde{\eta}(x, k+1), \tilde{\eta}(x+1, k+1)) \in \{0, ?\}^2 \setminus \{(0, 0)\}$. By the means of the fact that $\eta(\mathbf x) \leq \widehat{\eta}(\mathbf x)$ for all $\mathbf x \in H_{k+1}$, it follows that $(\eta(x, k+1), \eta(x+1, k+1)) \in \{0, ?\}^2$. By the definition of $\E_{\omega}$ $\E_{\omega}\eta(\mathbf x) \in \{1, ?\}$ and $\E_{\omega}\tilde{\eta}(\mathbf x) = ?$. This allows us to conclude that $\E_{\omega}\eta(\mathbf x) \geq ? = \E_{\omega}\tilde{\eta}(\mathbf x)$.
    \item Suppose $(\tilde{\eta}(x, k+1), \tilde{\eta}(x+1, k+1)) \in \{1, 0, ?\}^2 \setminus \{0, ?\}^2$. Using the definition of $\E_{\omega}$, we have that $\E_{\omega}\tilde{\eta}(\mathbf x) = 0$. We can conclude that, no matter what $(\eta(x, k+1), \eta(x+1, k+1))$ is, we must have $\E_{\omega}\eta(\mathbf x) \geq 0 = \E_{\omega}\tilde{\eta}(\mathbf x)$.
\end{enumerate}
For the second part of the lemma, we begin with $\eta(\mathbf x) \trianglelefteq \tilde{\eta}(\mathbf x)$ for all $\mathbf x \in H_{k+1}$. We again fix a trap/target/open labelling, say $\omega$, on $H_k$ and whenever $(x, k) = \mathbf x \in H_k$ is open, we consider the following cases (assuming as earlier that $x$ is even):
\begin{enumerate}
    \item Suppose $\eta(x, k+1) = \eta(x+1, k+1)) = 0$. By virtue of the fact that $\eta(\mathbf x) \trianglelefteq \tilde{\eta}(\mathbf x)$ for all $\mathbf x \in H_{k+1}$, we have that  $(\tilde{\eta}(x, k+1), \tilde{\eta}(x+1, k+1)) \in \{0, ?\}^2$. By the definition of $\E_{\omega}$, it follows that $\E_{\omega}\tilde{\eta}(\mathbf x) \in \{1, ?\}$ and that $\E_{\omega}\eta(\mathbf x) = 1$. From this, we can conclude that $\E_{\omega}\eta(\mathbf x) = 1 \trianglelefteq \E_{\omega}\tilde{\eta}(\mathbf x)$.
    \item Suppose $(\eta(x, k+1), \eta(x+1, k+1)) \in \{0, ?\}^2 \setminus \{(0, 0)\} $. By means of the fact that $\eta(\mathbf x) \trianglelefteq \tilde{\eta}(\mathbf x)$ for all $\mathbf x \in H_{k+1}$, it follows that $(\tilde{\eta}(x, k+1), \tilde{\eta}(x+1, k+1)) \in \{0, ?\}^2 \setminus \{(0, 0)\}$. By the definition of $\E_{\omega}$, we have that $\E_{\omega}\eta(\mathbf x) = ? = \E_{\omega}\tilde{\eta}(\mathbf x)$. The last equation satisfies the required inequality. 
    \item Suppose $(\eta(x, k+1), \eta(x+1, k+1)) \in \{1, 0, ?\}^2 \setminus \{0, ?\}^2$. Since $\eta(\mathbf x) \trianglelefteq \tilde{\eta}(\mathbf x)$ for all $\mathbf x \in H_{k+1}$, we must have that $(\tilde{\eta}(x, k+1), \tilde{\eta}(x+1, k+1)) \in \{1, 0, ?\}^2\setminus \{(0, 0)\}$. By the defintion of $\E_{\omega}$, we have $\E_{\omega}\tilde{\eta}(\mathbf x) \in \{0, ?\}$ and $\E_{\omega}\eta(\mathbf x) = 0$. This allows us to conclude that $\E_{\omega}\eta(\mathbf x) = 0 \trianglelefteq \E_{\omega}\eta(\mathbf x)$.
\end{enumerate}
This completes the proof.
\end{proof}

\emph{Proof of Proposition \ref{prop:ergodic_env}} If $\E_{p, q}$ is ergodic then clearly $E_{p, q}$ is ergodic. Suppose $E_{p, q}$ is ergodic. Let $\bar{0} \in \{0, 1, ?\}^{\mathbb Z}$ be the configuration which is identically $0$ everywhere and similarly let $\bar{1}$ be the configuration which is $1$ everywhere.  Let $\delta_0$ and $\delta_1$ be the distributions which assign probability $1$ to the configurations $\bar{0}$ and $\bar{1}$ respectively. For any distribution $\mu$, we have $\delta_0 \leq \mu \leq \delta_1$ where $\leq$ indicates stochastic domination with respect to the coordinate-wise partial order induced by $0 < ? < 1$ (as in Lemma~\ref{lemma_basic}). By Lemma~\ref{lemma_basic}, we conclude that 
\begin{equation}\label{eq:lemma_basic_1}
    \E_{p, q}^{2k} \delta_0 \leq \E_{p, q}^{2k} \mu \leq \E_{p, q}^{2k} \delta_1
\end{equation}
and that 
\begin{equation}\label{eq:lemma_basic_2}
    \E_{p, q}^{2k+1} \delta_0 \geq \E_{p, q}^{2k+1} \mu \geq \E_{p, q}^{2k+1} \delta_1
\end{equation} 
for each $k \in \mathbb N_0$. It is straightforward to see that $\E_{p, q}$ when restricted to the sub-alphabet $\{0, 1\}$ boils down to the original GPCA $E_{p, q}$ yielding the equalities $\E_{p, q}^k \delta_0 = E_{p, q} \delta_0$ and $\E_{p, q}^k \delta_1 = E_{p, q} \delta_1$. This allows us to rewrite equation~\ref{eq:lemma_basic_1} and ~\ref{eq:lemma_basic_2} as follows:
\begin{equation}\label{eq:lemma_basic_3}
    E_{p, q}^{2k} \delta_0 \leq \E_{p, q}^{2k} \mu \leq E_{p, q}^{2k} \delta_1
\end{equation}
\begin{equation}\label{eq:lemma_basic_4}
    E_{p, q}^{2k+1} \delta_0 \geq \E_{p, q}^{2k+1} \mu \geq E_{p, q}^{2k+1} \delta_1
\end{equation} 
Since $E_{p, q}$ is ergodic,  we know that it has a unique stationary distribution $\nu$ and each of $\E_{p, q}^k\mu$ and $\E_{p, q}^k\mu$ converge to $\nu$ as $k$ approaches infinity. Therefore, ~\ref{eq:lemma_basic_3} and \ref{eq:lemma_basic_4} together imply that $\E_{p, q} \mu $ also converges to $\nu$. This establishes that $\E_{p, q}$ is also ergodic. 

\emph{Proof of Proposition \ref{prop:game_ergodic}} The proof of this proposition follows via an argument identical to the proof of Proposition 2.2 of \cite{holroyd2019percolation}.



\section{Proof of Theorem~\ref{thm:main_theorem}}\label{sec:proof}

For some $i\in \mathbb Z$ and $k \in \mathbb N$, let $S = \{i, \dots, i+k-1\} \subseteq \mathbb Z$ be a finite index set of size $k$ and $a_1, a_2, \dots a _k \in \{0, 1, ?\}$ be some symbols. We define $( a_1, a_2, \dots , a_k)_i = \{ \eta \in \{0, 1 ,?\}^{\mathbb Z} : \eta(j) = a_{j - i + 1} \hspace{1.75 mm} \forall j \in S\} $ as a cylinder set of length $k$ indexed at $i$. 

The lemma that we state now follows from the definition of the GPCA $\E_{p, q}$ (in particular we use equation~\ref{general_update_rule_eq}, the neighbourhoods of $\E_{p, q}$ and the fact that the state of each site is updated independently under the action of $\E_{p, q}$.

\begin{lemma}\label{lemma:e_pq}
Let $\mu$ be a given distribution on $\{0, 1, ?\}^{\mathbb Z}$ and $\widehat{varphi}_{p,q}$ be the local updation rule of the GPCA $E_{p, q}$ as describe in section~\ref{subsec:envelope_gpca}. Then the following holds:
\begin{enumerate}
    \item If $i$ is odd and $(b_1, \dots , b_{2k}) \in \{0, 1, ?\}^{2k}$, then
    \begin{equation}
        \E_{p,q} \mu (b_1, \dots , b_{2k})_i = \sum_{(a_1, \dots , a_{2k}) \in \{0, 1, ?\}^{2k} } f_1(a_1, \dots , a_{2k}, b_1, \dots , b_{2k}) \mu(a_1, \dots , a_{2k})_{i+1} 
    \end{equation}
    where 
    \begin{align*}
    f_1(a_1, \dots , a_{2k}, b_1, \dots , b_{2k}) = \prod_{t = 1}^k \widehat{\varphi}_{p,q}(a_{2t - 1}, a_{2t}, b_{2t-1}) \widehat{\varphi}_{p,q}(a_{2t-1}, a_{2t}, b_{2t}).
    \end{align*}
    \item If $i$ is odd and $(b_1, \dots , b_{2k-1}) \in \{0, 1, ?\}^{2k-1}$, then
    \begin{equation}
        \E_{p,q} \mu (b_1, \dots , b_{2k-1})_i = \sum_{(a_1 \dots a_{2k}) \in \{0, 1, ?\}^{2k} } f_2(a_1 \dots a_{2k}, b_1, \dots , b_{2k-1}) \mu(a_1 \dots a_{2k})_{i+1} 
    \end{equation}
    where 
    \begin{align*}
        f_2(a_1 \dots a_{2k}, b_1, \dots , b_{2k-1}) = (\prod_{t = 1}^{k-1} \widehat{\varphi}_{p,q}(a_{2t - 1}, a_{2t}, b_{2t-1}) \widehat{\varphi}_{p,q}(a_{2t-1}, a_{2t}, b_{2t}))\widehat{\varphi}_{p,q}(a_{2k-1}, a_{2k}, b_{2k-1})
    \end{align*}
    \item If $i$ is even and $(b_1, \dots , b_{2k}) \in \{0, 1, ?\}^{2k}$, then 
    \begin{equation}
        \E_{p,q} \mu (b_1, \dots , b_{2k})_i = \sum_{(a_1, \dots, a_{2k+2}) \in \{0, 1, ?\}^{2k+2} } f_3(a_1, \dots, a_{2k+2}, b_1, \dots , b_{2k}) \mu(a_1, \dots, a_{2k+2})_{i} 
    \end{equation}
    where 
    \begin{align*}
        f_3(a_1, \dots, a_{2k+2}, b_1, \dots &, b_{2k}) = \\
        &\widehat{\varphi}_{p,q}(a_{1}, a_{2}, b_{1})(\prod_{t = 1}^k \widehat{\varphi}_{p,q}(a_{2t - 1}, a_{2t}, b_{2t-1}) \widehat{\varphi}_{p,q}(a_{2t-1}, a_{2t}, b_{2t})) \widehat{\varphi}_{p,q}(a_{2k+1}, a_{2k}, b_{2k})
    \end{align*}
    \item If $i$ is even and $(b_1, \dots , b_{2k-1}) \in \{0, 1, ?\}^{2k-1}$, then 
    \begin{equation}
        \E_{p,q} \mu (b_1, \dots , b_{2k-1})_i = \sum_{(a_1, \dots, a_{2k}) \in \{0, 1, ?\}^{2k} } f_4(a_1, \dots, a_{2k}, b_1, \dots , b_{2k-1}) \mu(a_1, \dots, a_{2k})_{i} 
    \end{equation}
    where 
    \begin{align*}
        f_4(a_1, \dots, a_{2k}, b_1, \dots , b_{2k-1}) = \widehat{\varphi}_{p,q}(a_{1}, a_{2}, b_{1})(\prod_{t = 1}^{k-1} \widehat{\varphi}_{p,q}(a_{2t - 1}, a_{2t}, b_{2t-1}) \widehat{\varphi}_{p,q}(a_{2t-1}, a_{2t}, b_{2t}))
    \end{align*}
\end{enumerate}
\end{lemma}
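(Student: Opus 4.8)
The plan is to establish all four identities by direct computation from the definition of $\E_{p,q}$, leaning on exactly two facts. First, by \eqref{general_update_rule_eq} together with the stipulation that each site is updated independently, once we condition on a configuration $\eta$ on the source row the updated states $\{\E_{p,q}\eta(n)\}_n$ are mutually independent; hence the conditional probability that the output cylinder reads $(b_1,\dots,b_L)$ factorizes as a product over the $L$ output sites. Second, the neighbourhood of a site is dictated solely by its parity ($\mathcal N_1=\{1,2\}$ for odd sites, $\mathcal N_0=\{0,1\}$ for even sites), so the pair of source sites that $\E_{p,q}\eta(n)$ depends on is completely determined by $n$.

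First I would record the source dependence explicitly. Writing $a_s$ for the relevant source symbols, the key observation is that an odd site $2m-1$ and the even site $2m$ \emph{both} depend on the pair $(\eta(2m),\eta(2m+1))$, whereas the even site $2m$ and the odd site $2m+1$ depend on disjoint pairs $(\eta(2m),\eta(2m+1))$ and $(\eta(2m+2),\eta(2m+3))$ respectively. Thus two consecutive output sites share a common source pair precisely when the left site of the pair is odd; this single combinatorial fact is what governs all four cases. Given this, for each case I would read off the minimal block of source sites on which the entire output cylinder depends — this fixes the index and length of the $\mu$-cylinder appearing in the statement — and then assemble the factorized conditional probability: $\Prob[\E_{p,q}\eta(i+j-1)=b_j,\ 1\le j\le L\mid \eta]$ equals the product over $j$ of $\widehat{\varphi}_{p,q}(\text{source pair of }b_j,\,b_j)$, and grouping the two factors that share each source pair reproduces precisely the products $f_1,f_2,f_3,f_4$. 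Summing over all source configurations weighted by their $\mu$-probability, via the law of total probability, yields the displayed identities.

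The four cases are dictated by two independent binary choices at the two ends of the cylinder. The parity of $i$ controls the left boundary: when $i$ is odd the cylinder begins with a complete shared pair $(i,i+1)$, so the source block starts at $i+1$ (cases (i),(ii)), whereas when $i$ is even the leftmost site $i$ reaches below the cylinder, forcing the source block to start at $i$ and acquire an extra site (cases (iii),(iv)). The parity of the cylinder length controls the right boundary in the analogous way, determining whether the rightmost output site completes a shared pair inside the source block or reaches one extra site to the right. These two choices together explain why the source block has length $2k$ in cases (i), (ii), (iv) but length $2k+2$ in case (iii), and how the $a$-indices align with the $b$-indices in each $f_j$.

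The only genuine obstacle is this index bookkeeping at the two boundaries: in each case one must verify whether the leftmost and rightmost output sites are paired within the cylinder or spill outside it, since that is what determines the correct span of source sites and the alignment of the arguments of $\widehat{\varphi}_{p,q}$. The interior of the cylinder behaves identically across all four cases, so once the boundary alignment is pinned down the factorizations follow mechanically, and the lemma amounts to the law of total probability applied to conditionally independent updates.
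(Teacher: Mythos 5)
Your proposal is correct and is essentially the paper's own argument: the paper proves this lemma only by remarking that it follows from the update rule~\ref{general_update_rule_eq}, the parity-dependent neighbourhoods $\mathcal{N}_0 = \{0,1\}$ and $\mathcal{N}_1 = \{1,2\}$, and the independence of site updates, and your law-of-total-probability computation built on the observation that the output sites $2m-1$ and $2m$ share the source pair $(\eta(2m), \eta(2m+1))$ is precisely that sketch carried out in full. One remark: your boundary bookkeeping actually produces the \emph{corrected} forms of $f_3$ and $f_4$ --- for instance in case (iii) the product should run over the $k-1$ interior shared pairs with factors $\widehat{\varphi}_{p,q}(a_{2t+1}, a_{2t+2}, b_{2t})\,\widehat{\varphi}_{p,q}(a_{2t+1}, a_{2t+2}, b_{2t+1})$, flanked by $\widehat{\varphi}_{p,q}(a_1, a_2, b_1)$ and $\widehat{\varphi}_{p,q}(a_{2k+1}, a_{2k+2}, b_{2k})$, giving exactly $2k$ factors, whereas the paper's displayed $f_3$ has $2k+2$ factors and misaligned indices --- so the misprints in the paper's statement of cases (iii)--(iv) are repaired, not contradicted, by your derivation.
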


With this lemma in hand, we show that it is enough to prove Theorem~\ref{thm:main_theorem} for distributions $\mu$ on $\{0, 1, ?\}^{\mathbb Z}$ satisfying:
\begin{enumerate}
\item $\mu(b_1b_2\dots b_k)_i = \mu(b_1b_2\dots b_k)_{i+2} \forall i \in \mathbb Z$.
    \item $\mu(b_1b_2\dots b_{2k})_i = \mu(b_{2k}b_{2k-1}\dots b_1)_i \forall i \in \mathbb Z$.
    \item $\mu(b_1b_2\dots b_{2k+1})_i = \mu(b_{2k+1}b_{2k}\dots b_1)_{i+1} \forall i \in \mathbb Z$.
    \item If $i+j$ is even then, $\mu(b_1b_2\dots b_j b_{j+1} \dots b_k)_i = \mu(b_1 b_2 \dots b_{j-1} b_{j+1} b_j b_{j+2} \dots b_k)_i \forall i \in \mathbb Z$.
    
\end{enumerate}

\begin{lemma}\label{lemma:suffice}
Let $\mu$ be a distribution on $\{0, 1, ?\}^{\mathbb Z}$ satisfying conditions (i)-(iv) above. Then $\E_{p , q} \mu$ also satisfies the conditions (i)-(iv).
\end{lemma}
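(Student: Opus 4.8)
The engine of the whole argument is a single symmetry of the local rule. Reading off \eqref{envelope_PCA_rule_1}--\eqref{envelope_PCA_rule_3}, the value $\widehat{\varphi}_{p,q}(a_0,a_1,b)$ depends on $(a_0,a_1)$ only through the unordered pair $\{a_0,a_1\}$ --- that is, only through which of the three mutually exclusive cases $(a_0,a_1)=(0,0)$, $(a_0,a_1)\in\{0,?\}^2\setminus\{(0,0)\}$, or $(a_0,a_1)\notin\{0,?\}^2$ occurs --- so that
\[
\widehat{\varphi}_{p,q}(a_0,a_1,b)=\widehat{\varphi}_{p,q}(a_1,a_0,b)\qquad\text{for all }a_0,a_1,b.
\]
The plan is to verify conditions (i)--(iv) for $\E_{p,q}\mu$ one at a time, in each case expanding $\E_{p,q}\mu$ of the relevant cylinder through the appropriate branch of Lemma~\ref{lemma:e_pq} and pushing all the work onto the weight functions $f_1,\dots,f_4$ by means of the displayed symmetry, invoking the hypotheses on $\mu$ only to re-index the surviving $\mu$-probabilities.

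Conditions (i) and (iv) are the short ones. For (i), note that each of $f_1,\dots,f_4$ is independent of $i$ and that the branch used depends on $i$ only through its parity, the $\mu$-probabilities on the right-hand side being anchored at $i$ (when $i$ is even) or $i+1$ (when $i$ is odd); replacing $i$ by $i+2$ leaves the parity and the weight unchanged and shifts every $\mu$-anchor by $2$, so condition (i) for $\mu$ yields condition (i) for $\E_{p,q}\mu$. For (iv), observe that in every weight function the two output coordinates belonging to a shared-parent pair --- the sites $2n-1$ and $2n$, which are exactly the consecutive pair $(b_j,b_{j+1})$ singled out by the requirement that $i+j$ be even --- enter only through a common factor of the form $\widehat{\varphi}_{p,q}(a_{2t-1},a_{2t},b_j)\,\widehat{\varphi}_{p,q}(a_{2t-1},a_{2t},b_{j+1})$, which is manifestly symmetric in $b_j$ and $b_{j+1}$. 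Since the $\mu$-probabilities carry no $b$-dependence, the whole sum is invariant under this swap, and (iv) for $\E_{p,q}\mu$ follows; incidentally it needs no hypothesis on $\mu$ at all.

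Conditions (ii) and (iii) are the real content, and I expect them to be the main obstacle. The idea is to reverse the \emph{summation} variables: having written, say for even length $2k$ and odd $i$, $\E_{p,q}\mu(b_{2k}\cdots b_1)_i=\sum_{a} f_1(a_1,\dots,a_{2k},b_{2k},\dots,b_1)\,\mu(a_1\cdots a_{2k})_{i+1}$, substitute $a'_s=a_{2k+1-s}$. A direct check --- matching factor $t$ of $f_1$ against factor $k-t+1$ and collapsing the two orderings of the neighbour pair through the displayed symmetry of $\widehat{\varphi}_{p,q}$ --- shows that $f_1(a,b_{2k},\dots,b_1)=f_1(a',b_1,\dots,b_{2k})$, after which condition (ii) for $\mu$ turns $\mu(a'_{2k}\cdots a'_1)_{i+1}$ back into $\mu(a'_1\cdots a'_{2k})_{i+1}$ and reproduces $\E_{p,q}\mu(b_1\cdots b_{2k})_i$.

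The delicate part is the bookkeeping across the four branches of Lemma~\ref{lemma:e_pq}. The parity of $i$ dictates which branch applies and whether the $\mu$-anchor sits at $i$ or $i+1$; the two ``unpaired'' end coordinates $b_1,b_{2k}$ appearing in the even-$i$ branch must be tracked separately from the internal shared-parent pairs; and --- crucially for (iii) --- reversing an odd-length word realigns the parities of the shared-parent pairs only after a unit shift of the anchor, which is precisely why (iii) is stated with the reversed word anchored at $i+1$ rather than $i$. Carrying the reflection through correctly, using condition (i) of $\mu$ to absorb the residual anchor shift in the odd-length case and conditions (ii)/(iii) of $\mu$ to reverse the $\mu$-cylinders, is where essentially all of the care is required; the underlying mechanism is in every case the same symmetry $\widehat{\varphi}_{p,q}(a_0,a_1,b)=\widehat{\varphi}_{p,q}(a_1,a_0,b)$ of the local rule.
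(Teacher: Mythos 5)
Your proposal is correct and follows essentially the same route as the paper: expand $\E_{p,q}\mu$ of each cylinder via the four branches of Lemma~\ref{lemma:e_pq}, exploit the invariance of the weights $f_1,\dots,f_4$ under translation, reversal (matching factor $t$ with factor $k-t+1$), and swaps of shared-parent output pairs, and then use properties (i)--(iv) of $\mu$ only to re-index the input cylinders, exactly as in the paper's proof. Your one genuine refinement is making explicit the symmetry $\widehat{\varphi}_{p,q}(a_0,a_1,b)=\widehat{\varphi}_{p,q}(a_1,a_0,b)$, which the paper's reversal computation uses only implicitly, and your observations that (iv) needs no hypothesis on $\mu$ and that the odd-length reversal in (iii) forces the unit anchor shift are both accurate.
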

\begin{proof} We prove each of the properties separately in the following parts:
\begin{enumerate}
    \item In this part, we show that $E_{p,q} \mu$ satisfies the first property. We begin with a cylinder set $(b_1 b_2 \dots b_{2k})_i$ indexed at an odd integer $i$. The first part of Lemma~\ref{lemma:e_pq} gives us:  

\begin{equation}\label{eq:suff_1}
        \E_{p,q} \mu (b_1, \dots , b_{2k})_i = \sum_{(a_1, \dots , a_{2k}) \in \{0, 1, ?\}^{2k} } f_1(a_1, \dots , a_{2k}, b_1, \dots , b_{2k}) \mu(a_1, \dots , a_{2k})_{i+1} 
    \end{equation}
    where
    \begin{align*}
    f_1(a_1, \dots , a_{2k}, b_1, \dots , b_{2k}) = \prod_{t = 1}^k \widehat{\varphi}_{p,q}(a_{2t - 1}, a_{2t}, b_{2t-1}) \widehat{\varphi}_{p,q}(a_{2t-1}, a_{2t}, b_{2t}).
    \end{align*}

Since $\mu$ satisfies the condition (i) above, we have that $\mu(a_1,\dots , a_{2k})_{i+1} = \mu(a_1, \dots , a_{2k})_{i+3}$. This allows us to rewrite equation~\ref{eq:suff_1} as

\begin{align*}
        \E_{p,q} \mu (b_1, \dots , b_{2k})_i &= \sum_{(a_1,\dots , a_{2k}) \in \{0, 1, ?\}^{2k} } f_1(a_1,\dots , a_{2k}, b_1, \dots , b_{2k}) \mu(a_1,\dots , a_{2k})_{i+1}  \\
         &= \sum_{(a_1,\dots , a_{2k}) \in \{0, 1, ?\}^{2k} } f_1(a_1,\dots , a_{2k}, b_1, \dots , b_{2k}) \mu(a_1,\dots , a_{2k})_{i+3}  \\
        &= \E_{p,q} \mu (b_1, \dots , b_{2k})_{i+2}
    \end{align*}
where in the last step, we use the first part of Lemma~\ref{lemma:e_pq}. Proofs of the other cases concerning the parities of $i$ and $k$ can be dealt with in a similar way.
\item In this part, we prove that $E_{p,q} \mu$ satisfies the second property. For this we begin with a cylinder set $(b_1, \dots, b_{2k})_i $ for an odd $i$ and use Lemma~\ref{lemma:e_pq} to conclude that 
\begin{equation}\label{eq:suff_2}
        \E_{p,q} \mu (b_1, \dots , b_{2k})_i = \sum_{(a_1, \dots , a_{2k}) \in \{0, 1, ?\}^{2k} } f_1(a_1, \dots , a_{2k}, b_1, \dots , b_{2k}) \mu(a_1, \dots , a_{2k})_{i+1} 
    \end{equation}
    where
    \begin{align*}
    f_1(a_1, \dots , a_{2k}, b_1, \dots , b_{2k}) = \prod_{t = 1}^k \widehat{\varphi}_{p,q}(a_{2t - 1}, a_{2t}, b_{2t-1}) \widehat{\varphi}_{p,q}(a_{2t-1}, a_{2t}, b_{2t}).
    \end{align*}
    From the fact that $\mu$ satisfies the property (ii) above, it follows that $\mu(a_1, \dots, a_{2k})_{i+1} = \mu(a_{2k}, \dots, a_1)_{i+1}$. We note that 
    
    \begin{align*}
        f_1(a_1, \dots, a_{2k}, b_1, \dots, b_{2k}) &= \prod_{t = 1}^k \widehat{\varphi}_{p,q}(a_{2t - 1}, a_{2t}, b_{2t-1}) \widehat{\varphi}_{p,q}(a_{2t-1}, a_{2t}, b_{2t}) \\
        &= \prod_{s = 1}^k \widehat{\varphi}_{p,q}(a_{2k - 2s + 1}, a_{2k- 2s + 2}, b_{2k - 2s + 1}) \widehat{\varphi}_{p,q}(a_{2k - 2s + 1}, a_{2k - 2s + 2}, b_{2k - 2s +2}) \\
        &= f_1( a_{2k}, \dots, a_1, b_{2k}, \dots b_1 )
    \end{align*}
    This allows us to re-write equation \ref{eq:suff_2} as follows:
    \begin{align*}
        \E_{p,q} \mu (b_1, \dots , b_{2k})_i &= \sum_{(a_1, \dots, a_{2k}) \in \{0, 1, ?\}^{2k} } f_1(a_1, \dots, a_{2k}, b_1, \dots , b_{2k}) \mu(a_1, \dots, a_{2k})_{i+1} \\
        &= \sum_{(a_{2k}, \dots, a_1) \in \{0, 1, ?\}^{2k} } f_1(a_{2k}, \dots, a_1 , b_{2k}, \dots, b_1 ) \mu(a_{2k}, \dots, a_1)_{i+1} \\
        &= \E_{p, q} \mu (b_{2k}, \dots, b_1)_{ i } 
    \end{align*}
    
    as required. In the last step, we have used Lemma~\ref{lemma:e_pq}. The other case where $i$ is even can be dealt analogously. 
    
    \item The proof that $\E_{p, q} \mu$ satisfies the third property is analogous to proof of the previous part.
    
    \item In this part, we prove that $\E_{p, q} \mu$ satisfies last property. We begin with a cylinder set $( b_1 b_2 \dots b_{2k} )_i$ with $i$ being an odd integer. Then using Lemma~\ref{lemma:e_pq}, we see that 
    \begin{equation}\label{eq:suff_4}
        \E_{p,q} \mu (b_1, \dots , b_{2k})_i = \sum_{(a_1, \dots , a_{2k}) \in \{0, 1, ?\}^{2k} } f_1(a_1, \dots , a_{2k}, b_1, \dots , b_{2k}) \mu(a_1, \dots , a_{2k})_{i+1} 
    \end{equation}
    where
    \begin{align*}
    f_1(a_1, \dots , a_{2k}, b_1, \dots , b_{2k}) = \prod_{t = 1}^k \widehat{\varphi}_{p,q}(a_{2t - 1}, a_{2t}, b_{2t-1}) \widehat{\varphi}_{p,q}(a_{2t-1}, a_{2t}, b_{2t}).
    \end{align*}
    Let $j \in \{1, \dots, 2k\}$ be an odd integer. Using the fact that $i+j$ is an even number,it follows easily from the definition of $f_1$ that 
    \begin{align*}
        f_1(a_1, \dots, a_{2k}, b_1, \dots, b_{2k}) = f_1(a_1, \dots, a_{2k}, b_1, \dots, b_{j-1}, b_{j+1}, b_j, b_{j+1}, \dots, b_{2k})
    \end{align*}

    This allows us to re-write equation~\ref{eq:suff_4} as 
    \begin{align*}
        \E_{p,q} \mu (b_1, \dots , b_{2k})_i &= \sum_{(a_1, \dots , a_{2k}) \in \{0, 1, ?\}^{2k} } f_1(a_1, \dots , a_{2k}, b_1, \dots , b_{2k}) \mu(a_1, \dots , a_{2k})_{i+1} \\
        &= \sum_{(a_1, \dots , a_{2k}) \in \{0, 1, ?\}^{2k} } f_1(a_1, \dots , a_{2k}, b_1, \dots, b_{j-1}, b_{j+1}, b_j, b_{j+1}, \dots, b_{2k}) \mu(a_1, \dots , a_{2k})_{i+1}\\
        &= \E_{p,q}\mu(b_1, \dots, b_{j-1}, b_{j+1}, b_j, b_{j+1}, \dots, b_{2k})_{i}
    \end{align*}
    as required. We note here that the last step follows from Lemma~\ref{lemma:e_pq}. Other cases concerning the parities of $i$ and $k$ follow analogously.

\end{enumerate}
This completes the proof of the lemma.

\end{proof}

\begin{remark} Via an argument identical to that outlined at the very beginning of the proof of Proposition 2.3 of \cite{holroyd2019percolation}, and using Lemmas~\ref{lemma_basic}, \ref{lemma:suffice}, we conclude the following; to prove Theorem~\ref{thm:main_theorem}, it suffices to show that $\mu(?)_i = 0$ for each $i \in \{0, 1\}$ for every stationary distribution $\mu$ of $\E_{p, q}$ that satisfies the properties (i)-(iv). Consequently, we confine ourselves to such distributions. In fact it is easy to see that such distributions $\mu$ satisfy $\mu(?)_0 = \mu(?)_1$.

\end{remark}

For the sake of brevity, we let $\widehat{**} = \{0, ?\}^2 \setminus \{(0, 0)\}$ and $\widehat{1*} = \{0, 1, ?\}^2 \setminus \{0, ?\}^2$. For some symbols $a_1, \dots, a_k, b_1, \dots, b_l \in \{0, 1, ?\}$ and a natural number $i$, we define $(a_1, \dots, a_k, \widehat{**}, b_1, \dots, b_l)_i$) to be the cylinder set containing configurations $\eta$ satisfying 
\begin{enumerate}
    \item $\eta(j) = a_{j - i +1}$ for $j \leq i + k -1$,
    \item $(\eta(i+k), \eta(i+k+1)) \in \widehat{**}$,
    \item $\eta(j) = a_{j-i+1}$ for $i + k + 2\leq j\leq i+l-1$.
\end{enumerate}

In a similar fashion, we define $(a_1, \dots, a_k, \widehat{1*}, b_1, \dots, b_l)_i$) to be the cylinder set containing configurations $\eta$ satisfying 
\begin{enumerate}
    \item $\eta(j) = a_{j - i +1}$ for $j \leq i + k -1$,
    \item $(\eta(i+k), \eta(i+k+1)) \in \widehat{1*}$,
    \item $\eta(j) = a_{j-i+1}$ for $i + k + 2\leq j\leq i+l-1$.
\end{enumerate}

\subsection{Construction of weight functions}We begin by constructing the weight function for the first part of Theorem~\ref{thm:main_theorem}. In what follows, given a cylinder set $(a_1, \dots, a_k)_i$ indexed at $i$, we will use Lemma~\ref{lemma:e_pq} repeatedly to compute $\E\mu(a_1, \dots, a_k)_i$. As in \cite{bhasin2022class} we will begin with a suitable initial weight function and then update it accordingly to obtain an inequality of the desired form. We begin with the initial weight function:

\begin{equation}\label{w_0}
    w_0(\mu) = 2\mu(?0)_1 + \mu(??)_1
\end{equation}
Using Lemma~\ref{lemma:e_pq},
\begin{align}
    w_0(\E_p\mu) &= 2p(1-p) \mu(\widehat{**})_0 + (1-p)^2 \mu(\widehat{**})_0 \nonumber \\
    &= (1-p^2) \mu(\widehat{**}) \label{eq:Ep_w_0}
\end{align}
Re-writing $w_0(\mu)$ by re-arranging some terms, we obtain:
\begin{align}
    w_0(\mu) &= 2(\mu(0?0)_0 + \mu(??0)_0 + \mu(1?0)_0) + \mu(0??)_0 + \mu(???)_0 + \mu(1??)_0 \nonumber \\
    &= 2(\mu(0?0)_0 + \mu(0?1)_0 + \mu(0??)_0) + \mu(??0)_0 + \mu(??1)_0 +\mu(???)_0 + (\mu(1?0)_0 \nonumber \\ & \enspace+ \mu(??0)_0 + \mu(1?0)_0 + \mu(1??)_0) - \mu(0??)_0 \nonumber\\
    &= (2\mu(0?)_0 + \mu(??)_0) + (\mu(1?0)_0 + \mu(??0)_0 + \mu(1?0)_0 + \mu(1??)_0) - \mu(0??)_0 \nonumber\\
    &= \mu(\widehat{**})_0 + (2\mu(1?0)_0 + \mu(??0)_0 + \mu(1??)_0) - \mu(0??)_0 \label{w_0_diff}
\end{align}

Subtracting equation~\ref{eq:Ep_w_0} from both sides:
\begin{align*}
    w_0(\mu) - w_0(\E_p\mu) &= \mu(\widehat{**})_0 + \mu(??0)_0 + \mu(1?0)_0 + \mu(1??)_0) - \mu(0??)_0 - (1-p^2) \mu(\widehat{**})\\
    &= p^2 \mu(\widehat{**})_0 + (2\mu(1?0)_0 + \mu(??0)_0 + \mu(1??)_0) - \mu(0??)_0
\end{align*}

The negative term $-\mu(0??)_0$ suggests that we should update our weight function as follows:
\begin{equation}\label{w_1}
    w_1(\mu) = w_0(\mu) + \mu(0??)_0
\end{equation}
Using Lemma~\ref{lemma:e_pq}, we obtain:
\begin{align*}
    \E_p\mu(0??)_0 &= p(1-p)^2 (\mu(00\widehat{**})_0 + \mu(\widehat{**}\widehat{**})_0) + (1-p)^2 (\mu(\widehat{1*}\widehat{**})_0)\\
    &= p(1-p)^2 (\mu(00\widehat{**})_0 + \mu(\widehat{**}\widehat{**})_0) + (1-p)^2 (\mu(100?)_0 + \mu(10?0)_0 + \mu(10??)_0 \\&\enspace+ \mu(1?0?)_0 + \mu(1??0)_0 + \mu(1???)_0 + \mu(010?)_0 + \mu(01?0)_0 + \mu(01??)_0 \\&\enspace+ \mu(?10?)_0 + \mu(?1?0)_0 + \mu(?1??)_0 + \mu(110?)_0 + \mu(11?0)_0 + \mu(11??)_0) \\
    &= p(1-p)^2 (\mu(00\widehat{**})_0 + \mu(\widehat{**}\widehat{**})_0) + (1-p)^2 (\mu(100?)_0 + \mu(10?0)_0 + \mu(10??)_0 \\&\enspace+ \mu(1?0?)_0 + \mu(1??0)_0 + \mu(1???)_0 + \mu(010?)_0 + \mu(?10?)_0 +  \mu(110?)_0)  
\end{align*}

Using equations~\ref{w_0_diff} and \ref{w_1}, we obtain: 
\begin{align}
    w_1(\mu) - w_1(\E_p\mu) &= p^2 \mu(\widehat{**})_0 + (2\mu(1?0)_0 + \mu(??0)_0 + \mu(1??)_0) - \mu(0??)_0 + \mu(0??_0) \nonumber \\
    &\enspace -p(1-p)^2 (\mu(00\widehat{**})_0 + \mu(\widehat{**}\widehat{**})_0) - (1-p)^2 (\mu(100?)_0 + \mu(10?0)_0 + \mu(10??)_0 \nonumber \\&\enspace+ \mu(1?0?)_0 + \mu(1??0)_0 + \mu(1???)_0 + \mu(010?)_0 + \mu(?10?)_0 +  \mu(110?)_0) \nonumber \\
    &= p^2 \mu(\widehat{**})_0 + (\mu(1?0)_0 + \mu(??0)_0) + (\mu(1?0)_0 + \mu(1??)_0) \nonumber \\
    &\enspace -p(1-p)^2 (\mu(00\widehat{**})_0 + \mu(\widehat{**}\widehat{**})_0) - (1-p)^2 (\mu(100?)_0 + \mu(10?0)_0 + \mu(10??)_0 \nonumber  \\&\enspace+ \mu(1?0?)_0 + \mu(1??0)_0 + \mu(1???)_0 + \mu(010?)_0 + \mu(?10?)_0 +  \mu(110?)_0) \nonumber \\
    &= p^2 \mu(\widehat{**})_0 + (\mu(1?0)_0 + \mu(??0)_0) + (\mu(1?00)_0 + \mu(1?01)_0 +\mu(1?0?)_0 + \mu(1??0)_0 \nonumber \\
    &\enspace+ \mu(1??1)_0 + \mu(1???)_0)  -p(1-p)^2 (\mu(00\widehat{**})_0 + \mu(\widehat{**}\widehat{**})_0) - (1-p)^2 (\mu(100?)_0 \nonumber \\
    &\enspace+ \mu(10?0)_0 + \mu(10??)_0 + \mu(1?0?)_0 + \mu(1??0)_0 + \mu(1???)_0 + \mu(010?)_0 \nonumber \\
    &\enspace+ \mu(?10?)_0 +  \mu(110?)_0)\nonumber \\
    &= p^2 \mu(\widehat{**})_0 + (\mu(1?0)_0 + \mu(??0)_0) + (\mu(1?01)_0) + \mu(1??1)_0 + (\mu(10?0)_0 +\mu(1?0?)_0 \nonumber \\
    &\enspace+ \mu(1??0)_0 + \mu(1???)_0  -p(1-p)^2 (\mu(00\widehat{**})_0 + \mu(\widehat{**}\widehat{**})_0) - (1-p)^2 (\mu(100?)_0 \nonumber \\
    &\enspace+ \mu(10?0)_0 + \mu(10??)_0 + \mu(1?0?)_0 + \mu(1??0)_0 + \mu(1???)_0 + \mu(010?)_0 \nonumber \\
    &\enspace+ \mu(?10?)_0 +  \mu(110?)_0) \nonumber \\
    &= p^2 \mu(\widehat{**})_0 + (\mu(1?0)_0 + \mu(??0)_0) + (\mu(1?01)_0 + \mu(1??1)_0) + p (2-p) (\mu(10?0)_0 \label{w_1_diff} \\
    &\enspace +\mu(1?0?)_0 + \mu(1??0)_0 + \mu(1???)_0 -p(1-p)^2 (\mu(00\widehat{**})_0 + \mu(\widehat{**}\widehat{**})_0) \nonumber\\
    &\enspace- (1-p)^2 (\mu(100?)_0 + \mu(10??)_0 + \mu(010?)_0 + \mu(?10?)_0 +  \mu(110?)_0)\nonumber
\end{align}

The terms $-p(1-p)^2 (\mu(00\widehat{**})_0 + \mu(\widehat{**}\widehat{**})_0)$ along with the terms $(\mu(1?0)_0 + \mu(??0)_0)$ suggest we should update our weight function as follows:
\begin{equation}\label{w_2}
    w_2(\mu) = w_1(\mu) - \mu(1?0)_0 - \mu(??0)_0
\end{equation}

This is because of the following equations obtained using Lemma~\ref{lemma:e_pq}:
 \begin{equation}\label{1?0}
        \E_p\mu(1?0)_0 = p(1-p)^2 \mu(00\widehat{**})_0 
    \end{equation}
    
     \begin{equation}\label{??0}
        \E_p\mu(??0)_0 = p(1-p)^2 \mu(\widehat{**}\widehat{**})_0
    \end{equation}
Using equations~\ref{w_1_diff}, \ref{w_2}, \ref{1?0} and \ref{??0}, we obtain that
\begin{align}\label{0.5}
    w_2(\mu) - w_2(\E_p\mu) &= p^2 \mu(\widehat{**})_0 +  (\mu(1?01)_0 + \mu(1??1)_0) + p (2-p) (\mu(10?0)_0 +\mu(1?0?)_0  \nonumber\\ 
    &\enspace+ \mu(1??0)_0 + \mu(1???)_0) - (1-p)^2 (\mu(100?)_0 + \mu(10??)_0 + \mu(10?)_1  )
\end{align}
Before updating our weight function further, we note that the equation \ref{0.5} yields to us the weight function for the case $q=0, p > 0.5$. In particular, we note that 
\begin{equation}\label{0?}
    \mu(0?)_0 \geq \mu(100?)_0 + \mu(10?)_1  
\end{equation}
and that 
\begin{equation}\label{??}
    \mu(??)_0 \geq \mu(10??)_0 
\end{equation}
From equations \ref{0?} and \ref{??} we obtain that 
\begin{equation}
    \mu(\widehat{**})_0 \geq \mu(100?)_0 + \mu(10??)_0 + \mu(10?)_1 
\end{equation}
Using this in  equation\ref{0.5} we obtain that
\begin{align}
    w_2(\mu) - w_2(\E_p\mu) &= p^2 (\mu(\widehat{**})_0 - (\mu(100?)_0 + \mu(10??)_0 + \mu(10?)_1) +  (\mu(1?01)_0 + \mu(1??1)_0) \nonumber\\
    &\enspace+ p (2-p) (\mu(10?0)_0 +\mu(1?0?)_0 + \mu(1??0)_0 + \mu(1???)_0) - (1-p)^2 (\mu(100?)_0 \nonumber\\
    &\enspace+ \mu(10??)_0 + \mu(10?)_1  ) + p^2 (\mu(100?)_0 + \mu(10??)_0 + \mu(10?)_1 )\nonumber\\
    &=  p^2 (\mu(\widehat{**})_0 - (\mu(100?)_0 + \mu(10??)_0 + \mu(10?)_1) +  (\mu(1?01)_0 + \mu(1??1)_0) \label{w_3_diff} \\
    &\enspace+ p (2-p) (\mu(10?0)_0 +\mu(1?0?)_0  + \mu(1??0)_0 + \mu(1???)_0) \\
    &\enspace+ (2p-1) (\mu(100?)_0 + \mu(10??)_0 + \mu(10?)_1  )
\end{align}
All the terms of right hand side of of the above equation are positive when $p>0.5$. Thus, when $\mu$ is a stationary distribution, we obtain that 
\begin{equation}
    \mu(100?)_0 + \mu(10??)_0 + \mu(10?)_1 = 0
\end{equation}
and that 
\begin{equation}
    \mu(\widehat{**})_0 - (\mu(100?)_0 + \mu(10??)_0 + \mu(10?)_1) = 0 
\end{equation}
From these two equations, it follows that $\mu(\widehat{**})_0 = 0$. Finally, from $\mu(?) = (1-p) \mu(\widehat{**})_0$, it follows that $\mu(?) = 0$, as required.

Continuing from where we left, equation~\ref{0.5} suggests us that we should update our weight function as follows:
\begin{equation}\label{w_4}
    w_3(\mu) = w_2(\mu) + \mu(100?)_0 + \mu(10??)_0 + \mu(10?)_1
\end{equation}

Using Lemma~\ref{lemma:e_pq}, we obtain:
\begin{equation}\label{100?}
    \E_p\mu(100?)_0 = p^2(1-p)^2 (\mu(0000\widehat{**})_0 + \mu(00\widehat{**}\widehat{**})_0 ) + (1-p)^2 (\mu(00\widehat{1*}\widehat{**})_0) \nonumber
\end{equation}

\begin{equation}\label{10??}
    \E_p\mu(10??)_0 = p(1-p)^3 (\mu(00\widehat{**}\widehat{**})_0)\nonumber
\end{equation}

\begin{equation}\label{10?}
    \E_p\mu(10?)_1 = p(1-p)^2 (\mu(00\widehat{**})_0) \nonumber
\end{equation}

From equations~\ref{w_3_diff}, \ref{w_4}, \ref{100?}, \ref{10??} and \ref{10?}, it follows that
\begin{align}
    w_3(\mu) - w_3(\E_p\mu) &= p^2 \mu(\widehat{**})_0 +  (\mu(1?01)_0 + \mu(1??1)_0) + p (2-p) (\mu(10?0)_0 +\mu(1?0?)_0  \nonumber\\
    &\enspace+ \mu(1??0)_0 + \mu(1???)_0) - (1-p)^2 (\mu(100?)_0 + \mu(10??)_0 + \mu(10?)_1  )\nonumber \\&\enspace +(\mu(100?)_0 + \mu(10??)_0 + \mu(10?)_1) - ( p^2(1-p)^2 (\mu(0000\widehat{**})_0 \nonumber\\
    &\enspace+ \mu(00\widehat{**}\widehat{**})_0 ) + (1-p)^2 (\mu(00\widehat{1*}\widehat{**})_0) ) -  p(1-p)^3 (\mu(00\widehat{**}\widehat{**})_0)\nonumber \\
    &\enspace- p(1-p)^2 (\mu(00\widehat{**})_0) \nonumber\\
    &= p^2 \mu(\widehat{**})_0 +  (\mu(1?01)_0 + \mu(1??1)_0) + p (2-p) (\mu(10?0)_0 +\mu(1?0?)_0  \nonumber\\
    &\enspace+ \mu(1??0)_0 + \mu(1???)_0 + \mu(100?)_0 + \mu(10??)_0 + \mu(10?)_1 ) \nonumber \\
    &\enspace- ( p^2(1-p)^2 (\mu(0000\widehat{**})_0 + \mu(00\widehat{**}\widehat{**})_0 ) + (1-p)^2 (\mu(00\widehat{1*}\widehat{**})_0) )\nonumber\\
    &\enspace -  p(1-p)^3 (\mu(00\widehat{**}\widehat{**})_0) - p(1-p)^2 (\mu(00\widehat{**}00)_0 + \mu(00\widehat{**}\widehat{**})_0 + \mu(00\widehat{**}\widehat{1*})_0)\nonumber \\
    &= p^2 \mu(\widehat{**})_0 +  (\mu(1?01)_0 + \mu(1??1)_0) + p (2-p) (\mu(10?0)_0 +\mu(1?0?)_0 \nonumber \\
    &\enspace+ \mu(1??0)_0 + \mu(1???)_0 + \mu(100?)_0 + \mu(10??)_0 + \mu(010?)_0 +\mu(110?)_0 \nonumber\\
    &\enspace + \mu(?10?)_0 )  -  p^2(1-p)^2 \mu(0000\widehat{**})_0 -  2p(1-p)^2 \mu(00\widehat{**}\widehat{**})_0 \nonumber\\
    &\enspace- p(1-p)^2 \mu(00\widehat{**}00)_0 - p(1-p)^2 \mu(00\widehat{**}\widehat{1*})_0 - (1-p)^2 \mu(00\widehat{1*}\widehat{**})_0 \nonumber\\
    &= p^2 \mu(\widehat{**})_0 +  (\mu(1?01)_0 + \mu(1??1)_0) + p (2-p) ( \mu(\widehat{1*}\widehat{**}) )  -  p^2(1-p)^2 \mu(0000\widehat{**})_0\nonumber \\
    &\enspace-  2p(1-p)^2 \mu(00\widehat{**}\widehat{**})_0 - p(1-p)^2 \mu(00\widehat{**}00)_0 - p(1-p)^2 \mu(00\widehat{**}\widehat{1*})_0 \nonumber\\
    &\enspace- (1-p)^2 \mu(00\widehat{1*}\widehat{**})_0  \nonumber\\
    &\geq p^2 \mu(\widehat{**})_0 +  (\mu(1?01)_0 + \mu(1??1)_0) + p (2-p) ( \mu(00\widehat{1*}\widehat{**})_0 + \mu(1\widehat{1*}\widehat{**})_1 \nonumber\\
    &\enspace+ \mu(?\widehat{1*}\widehat{**})_1 )   -  p^2(1-p)^2 \mu(0000\widehat{**})_0 -  2p(1-p)^2 \mu(00\widehat{**}\widehat{**})_0 - p(1-p)^2 \mu(00\widehat{**}00)_0 \nonumber\\
    &\enspace- p(1-p)^2 \mu(00\widehat{**}\widehat{1*})_0 - (1-p)^2 \mu(00\widehat{1*}\widehat{**})_0 \nonumber\\
    &= p^2 \mu(\widehat{**})_0 +  (\mu(1?01)_0 + \mu(1??1)_0) + p (2-p) ( \mu(1\widehat{1*}\widehat{**})_1 + \mu(?\widehat{1*}\widehat{**})_1 )  \label{w_4_diff}\\
    &\enspace -  p^2(1-p)^2 \mu(0000\widehat{**})_0 -  2p(1-p)^2 \mu(00\widehat{**}\widehat{**})_0 - p(1-p)^2 \mu(00\widehat{**}00)_0\nonumber \\
    &\enspace- p(1-p)^2 \mu(00\widehat{**}\widehat{1*})_0 - (1+2p^2 -4p) \mu(00\widehat{1*}\widehat{**})_0 \nonumber
\end{align}

We update the weight function as follows:

\begin{equation}\label{w_5}
    w_4(\mu) = w_3(\mu) - p(2-p) \mu(1\widehat{1*}\widehat{**})_1 - p(2-p) \mu(?\widehat{1*}\widehat{**})_1 
\end{equation}

Since $\E_p \mu(1?)_1 = 0$, we can simplify equation~\ref{w_5} as follows:
\begin{align*}
    \mu(1\widehat{1*}\widehat{**})_1 &= \mu(1100?)_1 + \mu(110?0)_1 + \mu(110??)_1 + \mu(11?0?)_1 + \mu(11??0)_1 + \mu(11???)_1 \\&\enspace+ \mu(1110?)_1 + \mu(111?0)_1 + \mu(111??)_1 + \mu(1010?)_1 + \mu(101?0)_1 + \mu(101??)_1 \\&\enspace+ \mu(1?10?)_1 + \mu(1?1?0)_1 + \mu(1?1??)_1 \\
    &= \mu(1100?)_1 + \mu(110?0)_1 + \mu(110??)_1 + \mu(11?0?)_1 + \mu(11??0)_1 + \mu(11???)_1 \\&\enspace+ \mu(1110?)_1 + \mu(1010?)_1 
\end{align*}
and
\begin{align*}
    \mu(?\widehat{1*}\widehat{**})_1 &= \mu(?100?)_1 + \mu(?10?0)_1 + \mu(?10??)_1 + \mu(?1?0?)_1 + \mu(?1??0)_1 + \mu(?1???)_1 \\&\enspace+ \mu(?110?)_1 + \mu(?11?0)_1 + \mu(?11??)_1 + \mu(?010?)_1 + \mu(?01?0)_1 + \mu(?01??)_1 \\&\enspace+ \mu(??10?)_1 + \mu(??1?0)_1 + \mu(??1??)_1 \\
    &= \mu(?010?)_1 + \mu(??10?)_1 
\end{align*}

These two identities allow us to re-write equation~\ref{w_5} as follows:
\begin{align}
    w_4(\mu) &= w_3(\mu) - p(2-p)( \mu(1100?)_1 + \mu(110?0)_1 + \mu(110??)_1 + \mu(11?0?)_1 + \mu(11??0)_1 \nonumber\\&\enspace+ \mu(11???)_1 + \mu(1110?)_1 + \mu(1010?)_1 +  \mu(?010?)_1 + \mu(??10?)_1  ) \label{w_5'} 
\end{align}

Using Lemma~\ref{lemma:e_pq}, we obtain the following:
\begin{equation}
    \E_p\mu(1100?)_1 = p^2 (1-p)^3 (\mu(0000\widehat{**})_0 + \mu(00\widehat{**}\widehat{**})_0 ) + (1-p)^3 \mu(00\widehat{1*}\widehat{**})_0 \nonumber
\end{equation}

\begin{equation}
   \E_p\mu(110?0)_1 = p^2 (1-p)^3 (\mu(00\widehat{**}00)_0 + \mu(00\widehat{**}\widehat{**})_0 ) + p(1-p)^3 \mu(00\widehat{**}\widehat{1*})_0 \nonumber
\end{equation}

\begin{equation}
    \E_p\mu(110??)_1 = p(1-p)^4 \mu(00\widehat{**}\widehat{**})_0 \nonumber
\end{equation}

\begin{equation}
    \E_p\mu(11?0?)_1 = p(1-p)^4 \mu(00\widehat{**}\widehat{**})_0 \nonumber
\end{equation}

\begin{equation}
    \E_p\mu(11??0)_1 = p(1-p)^4 (\mu(00\widehat{**}00)_0 + \mu(00\widehat{**}\widehat{**})_0) + (1-p)^4\mu(00\widehat{**}\widehat{1*}) \nonumber
\end{equation}

\begin{equation}
    \E_p\mu(11???)_1 = (1-p)^5 \mu(00\widehat{**}\widehat{**})_0 \nonumber
\end{equation}

\begin{equation}
    \E_p\mu(1110?)_1 = p(1-p)^4 \mu(0000\widehat{**})_0 \nonumber
\end{equation}

\begin{equation}
    \E_p\mu(1010?)_1 = p^2(1-p)^3 \mu(0000\widehat{**})_0 \nonumber
\end{equation}

\begin{equation}
    \E_p\mu(?010?)_1 = p^2(1-p)^3 \mu(\widehat{**}00\widehat{**})_0 \nonumber
\end{equation}

\begin{equation}
    \E_p\mu(??10?)_1 = p(1-p)^4 \mu(\widehat{**}00\widehat{**})_0 \nonumber
\end{equation}

Adding the previous ten equations and gathering coefficients of various cylinder sets, we obtain: 
\begin{align}
    w_4(\E_p\mu) \leq& w_3(\E_p\mu) - p(2-p) (p(1-p)^3(1+p) \mu(0000\widehat{**})_0 + (1-p)^3(1+p) \mu(00\widehat{**}\widehat{**})_0 \nonumber \\&+ (1-p)^3 \mu(00\widehat{**}\widehat{1*})_0 + (1-p)^3\mu(00\widehat{1*}\widehat{**})_0 ) \label{w_5_E}
\end{align}

Using \ref{w_4_diff}, \ref{w_5'} and \ref{w_5_E}, we obtain the following:
\begin{align*}
    w_4(\mu) - w_4(\E_p\mu) &\geq p^2 \mu(\widehat{**})_0 +  (\mu(1?01)_0 + \mu(1??1)_0) + p (2-p) ( \mu(1\widehat{1*}\widehat{**})_1 + \mu(?\widehat{1*}\widehat{**})_1 )  \\
    &\enspace -  p^2(1-p)^2 \mu(0000\widehat{**})_0 -  2p(1-p)^2 \mu(00\widehat{**}\widehat{**})_0 - p(1-p)^2 \mu(00\widehat{**}00)_0 \\
    &\enspace- p(1-p)^2 \mu(00\widehat{**}\widehat{1*})_0 - (1+2p^2 -4p) \mu(00\widehat{1*}\widehat{**})_0 - p(2-p) (\mu(1\widehat{1*}\widehat{**})_1\\
    &\enspace + \mu(?\widehat{1*}\widehat{**})_1) + p(2-p) (p(1-p)^3(1+p) \mu(0000\widehat{**})_0 + (1-p)^3(1+p) \mu(00\widehat{**}\widehat{**})_0 \\&+ (1-p)^3 \mu(00\widehat{**}\widehat{1*})_0 + (1-p)^3\mu(00\widehat{1*}\widehat{**})_0 )\\
    &= p^2 \mu(\widehat{**})_0 +  (\mu(1?01)_0 + \mu(1??1)_0)    -  p^2(1-p)^2 \mu(0000\widehat{**})_0   \\
    &\enspace - 2p(1-p)^2 \mu(00\widehat{**}\widehat{**})_0 - p(1-p)^2 \mu(00\widehat{**}00)_0 - p(1-p)^2 \mu(00\widehat{**}\widehat{1*})_0  \\
    &\enspace- (1+2p^2 -4p) \mu(00\widehat{1*}\widehat{**})_0  + p(2-p) (p(1-p)^3(1+p) \mu(0000\widehat{**})_0  \\
    &\enspace+ (1-p)^3(1+p) \mu(00\widehat{**}\widehat{**})_0 + (1-p)^3 \mu(00\widehat{**}\widehat{1*})_0 + (1-p)^3\mu(00\widehat{1*}\widehat{**})_0 )
\end{align*}

Finally, we update the weight function as follows:
\begin{equation}\label{w_6}
    w_5(\mu) = w_4(\mu) - \mu(1?01)_0 - \mu(1??1)_0 
\end{equation}

Using Lemma~\ref{lemma:e_pq}, we obtain:
\begin{equation}\label{1?01}
    \E_p\mu(1?01)_0 = p(1-p)^3 \mu(00\widehat{**}00)_0 \nonumber
\end{equation}

\begin{equation}\label{1??1}
    \E_p\mu(1??1)_0 = (1-p)^4 \mu(00\widehat{**}00)_0 \nonumber
\end{equation}

From \ref{w_6}, \ref{1?01} and \ref{1??1}, we see that:
\begin{align*}
    w_5(\mu) - w_5(\E_p\mu) &\geq  p^2 \mu(\widehat{**})_0 +  (\mu(1?01)_0 + \mu(1??1)_0)    -  p^2(1-p)^2 \mu(0000\widehat{**})_0   \\
    &\enspace - 2p(1-p)^2 \mu(00\widehat{**}\widehat{**})_0 - p(1-p)^2 \mu(00\widehat{**}00)_0 - p(1-p)^2 \mu(00\widehat{**}\widehat{1*})_0  \\
    &\enspace- (1+2p^2 -4p) \mu(00\widehat{1*}\widehat{**})_0  + p(2-p) (p(1-p)^3(1+p) \mu(0000\widehat{**})_0  \\
    &\enspace+ (1-p)^3(1+p) \mu(00\widehat{**}\widehat{**})_0 + (1-p)^3 \mu(00\widehat{**}\widehat{1*})_0 + (1-p)^3\mu(00\widehat{1*}\widehat{**})_0 ) \\
    &\enspace- (\mu(1?01)_0 + \mu(1??1)_0) +  p(1-p)^3 \mu(00\widehat{**}00)_0 + (1-p)^4 \mu(00\widehat{**}00)_0\\
    &= p^2 \mu(\widehat{**})_0     -  p^2(1-p)^2 \mu(0000\widehat{**})_0  - 2p(1-p)^2 \mu(00\widehat{**}\widehat{**})_0  \\
    &\enspace -p(1-p)^2 \mu(00\widehat{**}00)_0 - p(1-p)^2 \mu(00\widehat{**}\widehat{1*})_0  - (1+2p^2 -4p) \mu(00\widehat{1*}\widehat{**})_0    \\
    &\enspace + p(2-p) (p(1-p)^3(1+p) \mu(0000\widehat{**})_0  + (1-p)^3(1+p) \mu(00\widehat{**}\widehat{**})_0  \\
    &\enspace+ (1-p)^3 \mu(00\widehat{**}\widehat{1*})_0 + (1-p)^3\mu(00\widehat{1*}\widehat{**})_0 )  +  p(1-p)^3 \mu(00\widehat{**}00)_0  \\
    &\enspace+ (1-p)^4 \mu(00\widehat{**}00)_0
\end{align*}

And we use the term $p^2 \mu(\widehat{**})_0$ as follows:

\begin{center}
\begin{tabular}{ |c|c|c|c| } 
 \hline
  &  & $\widehat{**}$ &  \\ 
 \hline
  & 00 & $\widehat{**}$ & $\widehat{**}$ \\
  \hline 
   & 00 & $\widehat{**}$ & 00 \\
  \hline
   & 00  &  $\widehat{**}$& $\widehat{1*}$ \\
  \hline
   00 & $\widehat{1*}$ & $\widehat{**}$ &  \\
  \hline
  
\end{tabular}
\end{center}

From the above table, we can conclude that:
\begin{equation}
    \mu(\widehat{**})_0 \geq \mu(00\widehat{**}\widehat{**})_0 + \mu(00\widehat{**}00)_0 + \mu(00\widehat{**}\widehat{1*})_0 + \mu(00\widehat{1*} \widehat{**})_0
\end{equation}

\begin{align}
    w_5(\mu) - w_5(\E_p\mu) &\geq   p^2 \mu(\widehat{**})_0     -  p^2(1-p)^2 \mu(0000\widehat{**})_0  - 2p(1-p)^2 \mu(00\widehat{**}\widehat{**})_0 \nonumber \\
    &\enspace -p(1-p)^2 \mu(00\widehat{**}00)_0 - p(1-p)^2 \mu(00\widehat{**}\widehat{1*})_0  - (1+2p^2 -4p) \mu(00\widehat{1*}\widehat{**})_0    \nonumber\\
    &\enspace + p(2-p) (p(1-p)^3(1+p) \mu(0000\widehat{**})_0  + (1-p)^3(1+p) \mu(00\widehat{**}\widehat{**})_0 \nonumber \\
    &\enspace+ (1-p)^3 \mu(00\widehat{**}\widehat{1*})_0 + (1-p)^3\mu(00\widehat{1*}\widehat{**})_0 )  +  p(1-p)^3 \mu(00\widehat{**}00)_0 \nonumber \\
    &\enspace+ (1-p)^4 \mu(00\widehat{**}00)_0 \nonumber\\
    &= p^2(\mu(\widehat{**}_0 - (\mu(00\widehat{**}\widehat{**})_0 + \mu(00\widehat{**}00)_0 + \mu(00\widehat{**}\widehat{1*})_0 + \mu(00\widehat{1*} \widehat{**})_0)) \nonumber \\
    &\enspace+ p^2(\mu(00\widehat{**}\widehat{**})_0 + \mu(00\widehat{**}00)_0 + \mu(00\widehat{**}\widehat{1*})_0 + \mu(00\widehat{1*} \widehat{**})_0) \nonumber \\
    &\enspace- p^2(1-p)^2 \mu(0000\widehat{**})_0  - 2p(1-p)^2 \mu(00\widehat{**}\widehat{**})_0  -p(1-p)^2 \mu(00\widehat{**}00)_0 \nonumber \\
    &\enspace- p(1-p)^2 \mu(00\widehat{**}\widehat{1*})_0  - (1+2p^2 -4p) \mu(00\widehat{1*}\widehat{**})_0  \nonumber  \\
    &\enspace + p(2-p) (p(1-p)^3(1+p) \mu(0000\widehat{**})_0  + (1-p)^3(1+p) \mu(00\widehat{**}\widehat{**})_0 \nonumber \\
    &\enspace+ (1-p)^3 \mu(00\widehat{**}\widehat{1*})_0 + (1-p)^3\mu(00\widehat{1*}\widehat{**})_0 )  +  p(1-p)^3 \mu(00\widehat{**}00)_0 \nonumber \\
    &\enspace+ (1-p)^4 \mu(00\widehat{**}00)_0 \nonumber \\
    &= p^2(\mu(\widehat{**})_0 - (\mu(00\widehat{**}\widehat{**})_0 + \mu(00\widehat{**}00)_0 + \mu(00\widehat{**}\widehat{1*})_0 + \mu(00\widehat{1*} \widehat{**})_0)) \label{coeff_poly} \\
    &\enspace+ p^2(1-p)^2(p^3-2p^2-p+1) \mu(0000\widehat{**}) + p^4(p-2)^2 \mu(00\widehat{**}\widehat{**})_0 \nonumber \\
    &\enspace+ (1-4p + 6p^2 -2p^3) \mu(00\widehat{**}00)_0 + p(p^4 - 5p^3 + 8p^2 -4p + 1) \mu(00\widehat{**}\widehat{1*})_0 \nonumber \\
    &\enspace+ (p^5 - 5p^4 + 9p^3 - 8p^2 + 6p -1)\mu(00\widehat{1*}\widehat{**})_0 \nonumber
\end{align}

Note that the polynomial $p^5 - 5p^4 + 9p^3 - 8p^2 + 6p -1$ assumes positive values for $p>p_0$. This polynomial, alongwith some other polynomials shows up in \ref{coeff_poly} where in order to obtain our result, we want all those polynomials to take positive values. It turns out that all the polynomials are positive in $ ( p_0, p_1) $ where $p_1\approx 0.555$ and is the second largest root of $p^3-2p^2-p+1$.

Using Wolfram alpha we see that the coefficients of all the cylinder sets above are positive when $p\in  ( p_0, p_1) $ where $p_0 \approx 0.215$ is the unique real root of $p^5 - 5p^4 + 9p^3 - 8p^2 + 6p -1$ and $p_1\approx 0.555$ and is the second largest root of $p^3-2p^2-p+1$. For $p \in (p_0, p_1)$ and $\mu$ stationary, we obtain that
\begin{enumerate}
    \item $\mu(0000\widehat{**})_0 = 0$.
    \item $\mu(00\widehat{**}\widehat{**})_) = 0$.
    \item $\mu(00\widehat{**}00)_0 = 0$.
    \item $\mu(00\widehat{**}\widehat{1*})_0 = 0$.
    \item $\mu(00\widehat{1*}\widehat{**}) = 0$.
    \item $\mu(\widehat{**})_0 = \mu(00\widehat{**}\widehat{**})_0 + \mu(00\widehat{**}00)_0 + \mu(00\widehat{**}\widehat{1*})_0 + \mu(00\widehat{1*} \widehat{**})_0 = 0 $
\end{enumerate}

Finally, from $\mu(\widehat{**})_0 = 0$ and $E_p\mu(?)=(1-p)\mu(\widehat{**})_0$, it follows that $\mu(?) = 0$.

\vspace{1mm}
 For the second part of Theorem~\ref{thm:main_theorem}, we begin with the initial weight function 
 \begin{align}
     w_0(\mu) &= \mu(??)_1 + \mu(1?)_1 + \mu(?1)_1 + \mu(0?)_1 + \mu(?0)_1 \nonumber\\
     &= \mu(??)_1 + 2\mu(?1)_1 + 2\mu(?0)_1 \nonumber\\
     &= \mu(0??)_0 + \mu(1??)_0 + \mu(???)_0 + 2 ( \mu(0?1)_0 + \mu(1?1)_0 + \mu(??1)_0) + 2 ( \mu(0?0)_0 \nonumber\\ 
     & \enspace + \mu(1?0)_0 + \mu(??0)_0) \label{gw_0} 
 \end{align}
 
Using Lemma~\ref{lemma:e_pq}, we obtain the following identities:
\begin{equation}\label{g??}
    \E_{p, q} \mu(??)_1 = r^2 \mu(\widehat{**})_0
\end{equation}
\begin{equation}\label{g?1}
    \E_{p, q} \mu(?1)_1 = rq \mu(\widehat{**})_0
\end{equation}
\begin{equation}\label{g?0}
    \E_{p, q} \mu(?0)_1 = rp \mu(\widehat{**})_0
\end{equation}

Adding equations~\ref{g??}, \ref{g?1} and \ref{g?0}, we obtain that:
\begin{align}
    w_0(\E_{p, q} \mu) &= (r^2 + 2rp + 2rq ) \mu(\widehat{**})_0 \nonumber\\
    &= (1 - (p+q)^2 ) \mu(\widehat{**})_0\nonumber \\
    &= (1 - (p+q)^2 ) ( \mu(0?)_0 + \mu(?0)_0 + \mu(??)_0) \nonumber\\
    &= (1 - (p+q)^2 ) ( 2\mu(0?)_0  + \mu(??)_0)\nonumber \\
    &= (1 - (p+q)^2 ) ( 2 (\mu(0?0)_0 + \mu(1?0)_0 + \mu(??0)_0) + \mu(??0)_0 + \mu(??1)_0 + \mu(???)_0\label{gw_0_E}
\end{align}

using equations~\ref{gw_0} and \ref{gw_0_E}, we obtain:

\begin{align*}
    w_0(\mu) - w_0(\E_{p,q}\mu) &= (p+q)^2 \mu(\widehat{**})_0 + \mu(1??)_0 + \mu(??1)_0 + 2\mu(1?1)_0  + 2\mu(1?0)_0 \\ 
    &\enspace+ \mu(??0)_0 - \mu(0?0)_0 \\
    &= (p+q)^2 \mu(\widehat{**})_0 + \mu(01??)_1 + \mu(11??)_1 + \mu(?1??)_1 + \mu(??1)_0 \\ 
    &\enspace+ 2\mu(1?1)_0 + 2\mu(1?0)_0 + \mu(??0)_0 - \mu(00??)_1 - \mu(10??)_1 - \mu(?0??)_1\\
    &= (p+q)^2 \mu(\widehat{**})_0 + \mu(11??)_1 + \mu(?1??)_1 + \mu(??1)_0 + 2\mu(1?1)_0 \\ 
    &\enspace+ 2\mu(1?0)_0 + \mu(??0)_0 - \mu(00??)_1 - \mu(?0??)_1
\end{align*}
 
 We update our weight function as follows:
 
 \begin{align*}
     w_1(\mu) &= w_0(\mu) - \mu(11??)_1 - \mu(?1??)_1 - \mu(??1)_0 - 2\mu(1?1)_0 \\ 
    &\enspace - 2\mu(1?0)_0 - \mu(??0)_0 + \mu(00??)_1 + \mu(?0??)_1
 \end{align*}
 
Using Lemma~\ref{lemma:e_pq} we have the following identities:

\begin{equation}
    \E_{p, q}\mu(11??)_1 = (1-p)^2 r^2 \mu(00\widehat{**})_0 + q^2 r^2 \mu(\widehat{1*}\widehat{**})_0 + q^2 r^2 \mu(\widehat{**}\widehat{**})_0
\end{equation}

\begin{equation}
    \E_{p,q}\mu(?1??)_1 = q r^3 \mu(\widehat{**}\widehat{**})_0
\end{equation}

\begin{equation}
    \E_{p, q}\mu(??1)_0 = q r^2 \mu(\widehat{**}\widehat{**})_0
\end{equation}

\begin{equation}
    \E_{p,q}\mu(1?1)_0 = (1-p)^2 r \mu(00\widehat{**})_0 + r q^2 \mu(\widehat{1*}\widehat{**})_0 + r q^2 \mu(\widehat{**}\widehat{**})_0
\end{equation}

\begin{equation}
    \E_{p,q}\mu(1?0)_0 = (1-p)pr \mu(00\widehat{**})_0 + pqr \mu(\widehat{1*}\widehat{**})_0 pqr \mu(\widehat{**}\widehat{**})_0
\end{equation}

\begin{equation}
    \E_{p,q}\mu(??0)_0 = p r^2  \mu(\widehat{**}\widehat{**})_0
\end{equation}

\begin{equation}
    \E_{p,q}\mu(00??)_1 = p^2 r^2 \mu(00\widehat{**})_0 + (1-q)^2 r^2 \mu(\widehat{1*}\widehat{**})_0 + p^2 r^2 \mu(\widehat{**}\widehat{**})_0
\end{equation}

Using these equations and rewriting $ \mu(\widehat{**})_0 $ as $ \mu(00\widehat{**})_0 + \mu(\widehat{1*}\widehat{**})_0 + \mu(\widehat{**}\widehat{**})_0 $, we obtain the following:

\begin{align*}
    w_1(\mu) - w_1(\E_{p,q}\mu) &= ( (p+q)^2 + (1-p)^2 r^2 + (1-p)pr + (1-p)rq -p^2r^2 )\mu(00\widehat{**})_0 \\
    &\enspace+ ((p+q)^2 + q^2 r^2 + 2pqr  + 2rq^2 - (1-q)^2r^2) \mu(\widehat{1*}\widehat{**})_0 \\
    &\enspace+ ( (p+q)^2 + q^2 r^2 + qr^3 + 2pqr + r^2p + 2rq^2 + r^2q - p^2r^2 - pr^3 ) \mu(\widehat{**}\widehat{**})_0\\
    &= ( (p+q)^2 + (1-p)^2 r^2 + (1-p)pr + (1-p)rq -p^2r^2 )\mu(00\widehat{**})_0 \\
    &\enspace+ (-2q^2 + 2q(2-p) + 2p -1) \mu(\widehat{1*}\widehat{**})_0 \\
    &\enspace+ ( (p+q)^2 + q^2 r^2 + qr^3 + 2pqr + r^2p + 2rq^2 + r^2q - p^2r^2 - pr^3 ) \mu(\widehat{**}\widehat{**})_0\\
    &\geq ( (1-p)^2 r^2 + (1-p)pr -p^2r^2 )\mu(00\widehat{**})_0 \\
    &\enspace+ (-2q^2 + 2q(2-p) + 2p -1) \mu(\widehat{1*}\widehat{**})_0 \\
    &\enspace+ ( r^2p - p^2r^2 - pr^3 ) \mu(\widehat{**}\widehat{**})_0\\
    &= (r^2(1-2p) + rp(1-p)) \mu(00\widehat{**})_0 + (-2q^2 + 2q(2-p) + 2p -1) \mu(\widehat{1*}\widehat{**})_0 \\
    &\enspace + r^2 p q \mu(\widehat{**}\widehat{**})_0\\
    &\geq (-r^2p + rp(1-p)) \mu(00\widehat{**})_0 + (-2q^2 + 2q(2-p) + 2p -1) \mu(\widehat{1*}\widehat{**})_0  \\
    &\enspace + r^2 p q \mu(\widehat{**}\widehat{**})_0\\
     &\geq rpq \mu(00\widehat{**})_0 + (-2q^2 + 2q(2-p) + 2p -1) \mu(\widehat{1*}\widehat{**})_0 + r^2 p q \mu(\widehat{**}\widehat{**})_0
\end{align*}

Using the quadratic formula, it is easy to see that the polynomial is positive in the following range:
\begin{enumerate}
    \item $0\leq p<\frac{1}{2}$, $q > \frac{-\sqrt{p^2 +2}-p+2}{2}$.
    \item $\frac{1}{2} \leq p <1$, $0 < q < 1$.
\end{enumerate}

When $\mu$ is a stationary distribution and $p$ and $q$ are in the above mentioned range, we can conclude that
\begin{enumerate}
    \item $ \mu(00\widehat{**})_0 = 0$,
    \item $\mu(\widehat{1*}\widehat{**})_0 = 0$,
    \item $\mu(\widehat{**}\widehat{**})_0 = 0$.
\end{enumerate}
This allows us to conclude that $\mu(\widehat{**})_0 = 0$. Finally, using $\E_{p, q}\mu(?) = r \mu(\widehat{**})_0$, we see that $\E_{p, q}\mu(?) = 0$. Since $\mu$ is stationary, we obtain the desired result of Theorem~\ref{thm:main_theorem}.

\bibliography{mainTCS}
\nocite{*}

\end{document}